\documentclass[12pt]{amsart}

\usepackage{amssymb, amscd}
\usepackage{latexsym,epsfig}
\usepackage[all]{xy}
\usepackage[pdf]{pstricks}
\usepackage{hyperref}

\numberwithin{equation}{section}
\def\today{\ifcase\month\or Jan\or Febr\or  Mar\or  Apr\or May\or Jun\or  Jul\or Aug\or  Sep\or  Oct\or Nov\or  Dec\or\fi \space\number\day, \number\year}

\newcommand{\CC}{\mathbb C}
\newcommand{\EE}{\mathbb E}

\newcommand{\GG}{\mathbb G}

\newcommand{\PP}{\mathbb P}
\newcommand{\QQ}{\mathbb Q}

\newcommand{\ZZ}{\mathbb Z}

\newcommand\langepijl[1]{\buildrel {#1} \over \longrightarrow}
\newcommand{\Sym}{{\mathrm{Sym}}}

\def\leq{\leqslant}
\def\geq{\geqslant}

\numberwithin{equation}{section}
\newtheorem{theorem}{Theorem}[section]
\newtheorem{observation}[theorem]{Observation}
\newtheorem{lemma}[theorem]{Lemma}
\newtheorem{proposition}[theorem]{Proposition}

\newtheorem{criterion}[theorem]{Criterion}

\newtheorem{definition-lemma}[theorem]{Definition-Lemma}
\theoremstyle{definition}
\newtheorem{definition}[theorem]{Definition}
\newtheorem{example}[theorem]{Example}

\theoremstyle{remark}
\newtheorem{remark}[theorem]{Remark}

\begin{document}

\title[Tautological Modular Forms ]{Tautological Modular Forms of Level Two\\ and Degree Two}
\begin{abstract}
We show how to use divisors on the projectivized Hodge bundle to construct special 
vector-valued modular forms and then apply invariant theory to
construct all vector-valued Siegel modular forms of level two and degree two. Thus we
construct all modular forms 
in terms of certain basic modular forms that
 are intimately connected to the moduli of curves of genus two.
\end{abstract}

\author{Fabien Cl\'ery}
\address{Department of Mathematics, Loughborough University, UK}
\email{cleryfabien@gmail.com}

\author{Gerard van der Geer}
\address{Korteweg-de Vries Instituut, Universiteit van
Amsterdam, Postbus 94248,
1090 GE  Amsterdam, The Netherlands}
\email{g.b.m.vandergeer@uva.nl}

\maketitle

\begin{section}{Introduction}\label{sec-intro}
Moduli spaces of curves and abelian varieties come with a Hodge bundle ${\EE}$.
Sections of a power $L^{\otimes k}$ of $L=\det({\EE})$ on such a moduli space $M$
are called modular forms of weight $k$ if these sections extend over an appropriate smooth compactification $\overline{M}$
of $M$. Besides these scalar-valued modular forms we can also consider vector-valued
modular forms defined as sections of vector bundles obtained by applying a Schur functor
to ${\EE}$, like ${\rm Sym}^j({\EE})\otimes L^{\otimes k}$. Sometimes all such modular forms can be constructed from a few
modular forms that are intimately connected with the moduli interpretation of the moduli
space. We call such modular forms tautological. 

A first example is provided by the moduli space $\mathcal{A}_1$ of elliptic curves over ${\CC}$.
In this case the ring of modular forms is generated by the Eisenstein series $E_4$ and $E_6$
of weight $4$ and $6$. Up to a scalar factor these two forms appear as the coefficients
$g_2$ and $g_3$ in the equation of the universal elliptic curve
$$
y^2=4\, x^3-g_2 x -g_3 \, .
$$

A second example is provided by the moduli space $\mathcal{A}_2$ of 
principally polarized abelian surfaces over an algebraically closed field. 
It contains the moduli space $\mathcal{M}_2$ of curves of genus $2$ as an open subset via the Torelli map.
In characteristic not $2$ the universal curve over $\mathcal{M}_2$ can be written as
$y^2=f$ with $f$ a polynomial of degree $6$. In joint work \cite{CFG1,CFG2} with Carel Faber the polynomial $f$ was interpreted as
a meromorphic vector-valued Siegel modular form $\chi_{6,-2}$ of weight $(6,-2)$ 
and its discriminant ${\rm discr}(f)$ 
 as a modular form $\chi_{10}$ of weight $10$. We showed that all  
vector-valued Siegel modular forms of degree~$2$ and level $1$ can be expressed in
terms of these two modular forms.  Invariant theory of binary sextics tells us 
which expressions we need. A variant also works in characteristic~$2$.

A third example is provided by the moduli space $\mathcal{M}_3$ of curves of genus $3$.
Here the universal non-hyperelliptic curve of genus $3$ is given as a quartic curve
$f(x,y,z)=0$ in ${\PP}^2$. In this case we interpreted in \cite{CFG3} the polynomial $f$
as a vector-valued meromorphic Teichm\"uller modular form $\chi_{4,0,-1}$ of weight $(4,0,-1)$,
that is, as 
a section of ${\rm Sym}^4({\EE})\otimes \det({\EE})^{-1}$ on $\overline{\mathcal{M}}_3$. 
There is also a modular form
$\chi_9$ of weight $9$ corresponding to the divisor in $\overline{\mathcal{M}}_3$ defined
by the hyperelliptic locus. We showed that all Teichm\"uller modular forms on 
$\overline{\mathcal{M}}_3$ can be expressed in $\chi_{4,0,-1}$ and $\chi_{9}$. 
A consequence is that all Siegel modular forms of degree $3$ and level $1$ can be expressed
in the these two modular forms.  In this case the invariant theory of ternary quartics
provides the expressions that one needs.

\smallskip

In view of the usefulness of such tautological modular forms 
one might try to find other examples of such forms.
In this paper we construct such modular forms for the case of 
Siegel modular forms of level $2$ and degree $2$. 
The moduli space $\mathcal{A}_2[2]$ of principally polarized abelian surfaces with a
full level $2$ structure admits an action by the symmetric group $\mathfrak{S}_6\cong
{\rm Sp}(4,{\ZZ}/2{\ZZ})$. We can fix this action by requiring
 that the induced action on the moduli space $\mathcal{M}_2[2]$ 
of curves of genus $2$ together with marked Weierstrass points is by 
the permutation of the six Weierstrass points. 

\smallskip

By interpreting the compactification $\widetilde{\mathcal{A}}_2[2]$ as the closure $\overline{\mathcal{M}}_2[2]$ of $\mathcal{M}_2[2]$ we
construct `tautological' vector-valued modular forms for the six sections of the
projectivized Hodge bundle defined by the Weierstrass points
on the universal curve over $\mathcal{M}_2[2]$. It turns out that these forms are related to
the gradients of the six odd theta functions.

\smallskip
Furthermore, the interpretation of $\mathcal{M}_2[2] \subset \mathcal{A}_2[2]$ 
as a stack quotient for ${\rm GL}_2$ allows us to interpret vector-valued modular forms
on $\mathcal{A}_2[2]$ as covariants for the action of ${\rm GL}_2$ on $V^{\oplus 6}$, the space of
six linear homogeneous forms in two variables. Classical invariant theory by Gordan
provides a description of generators of the bigraded ring $\mathcal{C}(V^{\oplus 6})$ of covariants.
This interpretation gives us a homomorphism from the bigraded ring  $\mathcal{R}(\mathcal{A}_2[2]) 
=\oplus_{j,k} M_{j,k}(\Gamma_2[2])$ of vector-valued modular forms
to the ring of covariants
$$
\mu :\mathcal{R}(\mathcal{A}_2[2]) \longrightarrow \mathcal{C}(V^{\oplus 6})
$$
that factors through  the subring $\mathcal{C}'(V^{\oplus 6}) \subset \mathcal{C}(V^{\oplus 6})$ generated by covariants
that satisfy a condition on the degree in all six linear forms. We thus get homomorphisms
$$
\mathcal{R}(\mathcal{A}_2[2]) \xrightarrow{\mu} \mathcal{C}^{\prime}(V^{\oplus 6})\xrightarrow{\nu}
\mathcal{R}(\mathcal{M}_2[2])= \mathcal{R}(\mathcal{A}_2[2])[1/\chi_{5}] \, .
$$
with $\nu \circ \mu$ the identity on the ring of vector-valued modular forms $\mathcal{R}(\mathcal{A}_2[2])$ 
of level $2$ and $\chi_5$ the square root of ${\rm disc}(f)$. The map $\mu$ embeds the ring
$\mathcal{R}(\mathcal{A}_2[2])$, that is not finitely generated, into a finitely generated ring. 

Using the tautological modular forms constructed geometrically we can describe the homomorphism $\nu$
explicitly by substitution in covariants. Since we land in a ring of meromorphic modular forms
with possible poles along $\mathcal{A}_2[2]-\mathcal{M}_2[2]$ we need a criterion of holomorphicity.
We give an algebraic criterion and thus via $\mu$ we can identify
the ring of vector-valued Siegel modular forms of degree $2$ 
explicitly as a subring of $\mathcal{C}'(V^{\oplus 6})$:
$$
\mathcal{R}(\mathcal{A}_2[2]) \cong \{ c \in \mathcal{C}'(V^{\oplus 6}): {\rm ord}(c)\geq 0 \}\, .
$$

Our tautological modular forms thus come in three descriptions: defined by divisors, 
defined by coefficients or factors of the binary sextic, and by theta functions and their derivatives.

\smallskip

Using the $\mathfrak{S}_6$-cover $\mathcal{A}_2[2] \to \mathcal{A}_2$ we can describe in a similar
way the vector-valued modular forms for all intermediate levels between $1$ and $2$.
A good example is provided by
the moduli space $\mathcal{A}_2[w]= \mathcal{A}_2[2]/\mathfrak{S}_5$ that
contains the moduli space $\mathcal{M}_2[w]$ of curves of genus $2$ with a marked Weierstrass 
point as an open subspace. For a curve of genus $2$ with a marked Weierstrass
point the defining polynomial $f$ of degree $6$ splits off a linear term.
We show that in this case there exist vector-valued modular forms $\chi_{5,-1}$ and $\chi_{1,-1}$
with a character such that $\chi_{6,-2}=\chi_{5,-1}\chi_{1,-1}$ corresponds to 
the splitting of $f$ as a product of a degree $5$ term and a linear term. We show that all 
modular forms of level $\mathcal{A}_2[w]$ can be expressed in the forms 
$\chi_{5,-1}$, $\chi_{1,-1}$ and $\chi_5$, the square root of $\chi_{10}$.
In this case the resulting homomorphism
$$
\nu: \mathcal{C}'(V^{\oplus 5} \oplus V) \to \mathcal{R}(\mathcal{A}_2[w])[1/\chi_5]
$$
does not depend on the splitting.
A similar game can be played for all the intermediate levels between level $1$
and level~$2$ and we thus can use covariants of binary forms of smaller degree.

All of this can be made completely explicit. We can describe the vector-valued 
modular forms that we need  explicitly using 
the ten even theta constants and the gradients of the six odd theta functions.
The homomorphism from the ring of covariants is obtained by substituting  such modular forms
in covariants and we thus obtain the Fourier
expansions of the modular forms from those of the thetas.

Finally, we notice that the rings of vector-valued modular forms of degree $2$
are not finitely generated as shown by Grundh \cite[p.\ 234]{BGHZ}. Therefore the description
of such rings of modular forms inside finitely generated rings of covariants
is a good compensation for the lack of a finite set of generators. 
Moreover, this construction method of modular forms is ready made for 
machine construction of modular forms.
We hope to extend  this also to some extent to genus $3$.

\bigskip

{\sl Acknowledgement}:
The second author thanks YMSC of Tsinghua University in Beijing and MPIM in Bonn
for providing an excellent working environment.
\end{section}
\tableofcontents
\begin{section}{Moduli stacks}
The idea behind the method is the description of a moduli space as a stack quotient
and the Hodge bundle in terms of an equivariant bundle over the stack quotient.

The basic example is the moduli space $\mathcal{M}_2$ over an algebraically closed
field $k$ of characteristic not $2$. A curve of genus $2$ is hyperelliptic and 
the canonical map allows a description of the curve as a double cover of ${\PP}^1$
ramified at six points. Thus we may write it in affine form as $y^2=f$ with $f\in k[x]$ a polynomial of
degree $6$ with non-vanishing discriminant. If we use projective coordinates we may write
$f$ as a homogenous polynomial in $x_1,x_2$, say $f \in {\rm Sym}^6(V)$ with $V$
the vector space with basis $x_1,x_2$. Changing the basis changes $f$ by the action of
${\rm GL}(V)={\rm GL}_2$. We modify this action to take the form 
$$
f(x_1,x_2) \mapsto (ad-bc)^{-2} f(ax_1+bx_2,cx_1+dx_2)\quad \text{\rm under 
$\left( \begin{matrix} a &b \\ c & d \\ \end{matrix} \right) \in {\rm GL}_2$ }
$$
and view it as the natural action of ${\rm GL}_2$ on the irreducible
representation $V_{6,-2}={\rm Sym}^6(V)\otimes \det(V)^{-2}$.
 Then the stabilizer of a generic $f$ is of order $2$, and generated by $-1$.
We can view $V_{6,-2}$ as the space of binary sextics with a slightly
modified action of ${\rm GL}_2$

The moduli space $\mathcal{M}_2$ now allows the description as a stack quotient
$$
\alpha: [V_{6,-2}^0/{\rm GL}_2] \xrightarrow{\sim} \mathcal{M}_2\, ,
$$
where $V_{6,-2}^0 \subset V_{6,-2}$ is the Zariski open subset of polynomials with
non-vanishing discriminant. The important point is that the pull back of the Hodge
bundle ${\EE}$ is the equivariant bundle $V$. As explained in \cite{CFG1} and exploited further
in \cite{CFG2}, pullback of a section of ${\rm Sym}^j({\EE})\otimes \det({\EE})^k$
defines a covariant of binary sextics of degree $d=k+j/2$ and order $j$. Conversely, a covariant
of degree $k+j/2$ and order $j$ defines a section of ${\rm Sym}^j({\EE})\otimes \det({\EE})^k$
on $\mathcal{M}_2=\mathcal{A}_2-\mathcal{A}_{1,1}$ with $\mathcal{A}_{1,1}$
the locus of products of elliptic curves. Scalar-valued modular forms define invariants. 
The ring of scalar-valued modular forms
thus embeds as a subring in the ring of invariants as Igusa already noticed, but he used
theta functions to establish that fact, see \cite{Igusa1964}. The ring 
$\mathcal{R}(\mathcal{A}_2)$
of vector-valued modular forms
embeds as a non-finitely generated subring of the finitely generated ring $\mathcal{C}({\rm Sym}^6(V))$
of covariants
of binary sextics, while the ring of covariants can be viewed as a subring 
of the ring of vector-valued modular forms on $\mathcal{M}_2[2]$.
The homomorphism from the ring of covariants $\mathcal{C}(2,6)$ of
binary sextics sends the universal binary sextic $f=\sum_{i=0}^6 a_i x_1^{6-i}x_2^i$ 
to the meromorphic modular form
$\chi_{6,-2}$ on $\mathcal{A}_2$ 
and consists of replacing the coefficients $a_i$ by the coefficients $\alpha_i$
of the modular form $\chi_{6,-2}$. The modular form $\chi_{6,-2}$ has a simple pole
along $\mathcal{A}_{1,1}$ caused by the pole of $\alpha_3$ and allows determination of
holomorphicity of a modular form determined by a covariant.

\bigskip

It is our aim to generalize this to the modular forms on intermediate levels between
level $1$ and level $2$. We denote by $\mathcal{A}_2[2]$ the moduli space of
principally polarized abelian surfaces with a full level $2$ structure.
Over ${\CC}$ it is the space $\Gamma_2[2]\backslash \mathfrak{H}_2$ with
$\Gamma_2[2]$ the kernel of ${\rm Sp}(2,{\ZZ}) \to {\rm Sp}(2,{\ZZ}/2{\ZZ})$
and $\mathfrak{H}_2$ the Siegel upper half space of degree $2$.
The group ${\rm Sp}(2,{\ZZ}/2{\ZZ})$ acts on $\mathcal{A}_2[2]$ and on the 
Torelli image of the moduli space $\mathcal{M}_2[2]$ of curves of genus $2$ with a level $2$ structure.
The latter structure may be given by marking the six Weierstrass points on the curve.
The action of ${\rm Sp}(2,{\ZZ}/2{\ZZ})$ on the six Weierstrass points determines
an identification with the symmetric group $\mathfrak{S}_6$ on six letters. Then the intermediate
levels are given via the Galois action of $\mathfrak{S}_6$ on $\mathcal{A}_2[2]$.

We now look at full level $2$. We have a diagram of moduli stacks
$$
\begin{xy}
\xymatrix{
\mathcal{M}_2[2] \ar@{^{(}->}[r] \ar[d] & \mathcal{A}_2[2] \ar[d] \\
\mathcal{M}_2 \ar@{^{(}->}[r] & \mathcal{A}_2 \\
}
\end{xy}
$$
The group ${\rm Sp}(4,{\ZZ}/2{\ZZ})\cong \mathfrak{S}_6$ acts. Recall that $\mathfrak{S}_6$ possesses an outer automorphism. 
As just said, we can fix this isomorphism and hence the
action of $\mathfrak{S}_6$ by identifying
our stack $\mathcal{M}_2[2]$ with the stack $\mathcal{M}_{2,W}$, the moduli of curves of genus $2$ with six marked Weierstrass points.
The group $\mathfrak{S}_6$ naturally acts on the six Weierstrass points inducing an action on
$\mathcal{M}_2[2]$ and $\mathcal{A}_2[2]$. Note that the generic element $(C,p_1,\ldots,p_6)$
has a stabilizer of order $2$ generated by the hyperelliptic involution.

We know how to interpret $\mathcal{M}_2$ as a stack quotient for ${\rm GL}_2$ via
$$
\mathcal{M}_2 \xrightarrow{\sim} [ V_{6,-2}^0/{\rm GL}(V)]\, ,
$$
where $V=\langle x_1,x_2\rangle$ is a vector space with basis $x_1,x_2$ and $V_{6,-2}={\rm Sym}^6(V)\otimes \det(V)^{-2}$
with $V_{6,-2}^0$ denoting the Zariski open set of polynomials of degree $6$ with non-vanishing discriminant.
Then the stack $\mathcal{M}_2[2]$ allows the interpretation as a quotient stack as follows. Define
$$
\mathcal{P}= \{ (p_1,\ldots,p_6,f) \in ({\PP}^1)^6 \times V_{6,-2}^0: f(p_i)=0 \}\, .
$$
Note that ${\rm GL}_2$ acts on ${\PP}^1$ in the usual way via its quotient ${\rm PGL}_2$.
We then have the equivalence of stacks
$$
\varphi: [\mathcal{P} / {\rm GL}_2] \cong \mathcal{M}_2[2] \, .
$$
Note that the generic element $(p_1,\ldots,p_6,f)$ has as stabilizer the group of order $2$
generated by $-{\rm id}_V$.
The action of $\mathfrak{S}_6$
permutes the six roots of $f$.

\smallskip

The pull back of the Hodge bundle ${\EE}$ on $\mathcal{M}_2$ to $[\mathcal{P}/{\rm GL}_2]$ is the
equivariant vector bundle $V$. A scalar-valued modular form $F$ on $\mathcal{A}_2[2]$ of weight $k$ restricts to
$\mathcal{M}_2[2]$ and then defines an invariant of degree $d=k$ for the action of ${\rm GL}_2$ on $V^6$, that is,
it is given by a homogeneous function $\tilde{F}$ of degree $k$ in the coordinates $(\alpha_i:\beta_i)$, $i=1,\ldots,6$,
of $({\PP}^1)^6$ that is invariant under the action of ${\rm SL}_2$. Similarly, a vector-valued modular form $F$
on $\mathcal{A}_2[2]$ of weight $(j,k)$ restricts to a covariant of bidegree $(d,b)$ with $j=b$ and $k=d-b/2$;
that is, it is defined as a polynomial that is homogeneous of degree $d$ in the $(\alpha_i:\beta_i)$ and homogeneous of
degree $j$ in $x_1,x_2$ and that is invariant under the action of~${\rm SL}_2$.

The action of $\mathfrak{S}_6$ induces an action on spaces of covariants and modular forms.
For the reader's convenience we give a table with the dimensions of the irreducible representations of
$\mathfrak{S}_6$. These representations are given by the partitions of $6$. Please keep in mind
 that $\mathfrak{S}_6$ allows an outer automorphism, see Remark \ref{outerauto}.

\begin{footnotesize}
\smallskip
\vbox{
\bigskip\centerline{\def\quad{\hskip 0.6em\relax}\def\quod{\hskip 0.5em\relax }
\vbox{\offinterlineskip
\hrule\halign{&\vrule#&\strut\quod\hfil#\quad\cr
height2pt&\omit&&\omit&&\omit&&\omit&&\omit&&\omit&&\omit&&\omit&&\omit&&\omit &
& \omit && \omit &\cr
&$P$ && $[6]$ && $[5,1]$ && $[4,2]$ && $[4,1^2]$ && $[3^2]$ && $[3,2,1]$ && $[3,1^3]$ && $[2^3]$ && $[2^2,1^2]$ && $[2,1^4]$&& $[1^6]$ &\cr
\noalign{\hrule}&$\dim$ && $1$ && $5$ && $9$ && $10$ && $5$ && $16$ && $10$ && $5$ && $9$ && $5$ && $1$ &\cr
} \hrule}
}}
\end{footnotesize}

\end{section}
\begin{section}{Theta series with characteristics and their gradients}\label{theta}
In this auxiliary section we describe the modular forms defined by theta series that we need later.
Most of the results in this section can be found in \cite[Chapter V]{IgusaBook}.
For any $(\tau,z)\in \mathfrak{H}_2\times {\CC}^2$ and
$
\left[
\begin{smallmatrix}
\mu \\ \nu
\end{smallmatrix}
\right]=
\left[
\begin{smallmatrix}
\mu_1 & \mu_2 \\ \nu_1 & \nu_2
\end{smallmatrix}
\right]
$
with $(\mu_1,\mu_2)$ and $(\nu_1,\nu_2)$ in ${\ZZ}^2$, the
theta series with characteristics is defined as follows
\begin{equation}\label{ThetaCharac}
\vartheta_{
\left[
\begin{smallmatrix}
\mu \\ \nu
\end{smallmatrix}
\right]}
(\tau,z)=\hspace{-15pt}
\sum_{n=(n_1,n_2)\in {\ZZ}^2}
e^{\pi i (n+\mu/2)
\big(
\tau (n+\mu/2)^t
+2(z+\nu/2)^t
\big)}.
\end{equation}
For any
$
\left[
\begin{smallmatrix}
\mu' \\ \nu'
\end{smallmatrix}
\right]=
\left[
\begin{smallmatrix}
\mu'_1 & \mu'_2 \\ \nu'_1 & \nu'_2
\end{smallmatrix}
\right]
$
with $(\mu'_1,\mu'_2)$ and $(\nu'_1,\nu'_2)$ in ${\ZZ}^2$, a direct computation shows that
\begin{equation}\label{AdditionCharac}
\vartheta_{
\left[
\begin{smallmatrix}
\mu \\ \nu
\end{smallmatrix}
\right]+
2\left[
\begin{smallmatrix}
\mu' \\ \nu'
\end{smallmatrix}
\right]}
(\tau,z)=(-1)^{\nu'\,\mu^t}
\vartheta_{
\left[
\begin{smallmatrix}
\mu \\ \nu
\end{smallmatrix}
\right]}
(\tau,z)
\end{equation}
and this allows us to only take $(\mu_1,\mu_2)$ and $(\nu_1,\nu_2)$ in $\{0,1\}^2$ in~\eqref{ThetaCharac} and this is what we do from now on.
According to the parity of $\mu\nu^t=\mu_1\nu_1+\mu_2\nu_2$, we call the characteristics
$\left[\begin{smallmatrix} \mu \\ \nu \end{smallmatrix}\right]$  even or odd. As a function of $z$, the function
$\vartheta_{\left[\begin{smallmatrix}\mu \\ \nu\end{smallmatrix}\right]}$ has the same parity as its characteristic
$\left[\begin{smallmatrix}\mu \\ \nu\end{smallmatrix}\right]$.
There are ten even characteristics that we order as follows
\begin{align*}
& n_1=\left[\begin{matrix} 0 & 0 \cr 0 & 0 \cr\end{matrix}\right],
n_2=\left[\begin{matrix} 0 & 0 \cr 0 & 1 \cr\end{matrix}\right],
n_3=\left[\begin{matrix} 0 & 0 \cr 1 & 0 \cr\end{matrix}\right],
n_4=\left[\begin{matrix} 0 & 0 \cr 1 & 1 \cr\end{matrix}\right],
n_5=\left[\begin{matrix} 0 & 1 \cr 0 & 0 \cr\end{matrix}\right], \\
& n_6=\left[\begin{matrix} 0 & 1 \cr 1 & 0 \cr\end{matrix}\right],
n_7=\left[\begin{matrix} 1 & 0 \cr 0 & 0 \cr\end{matrix}\right],
n_8=\left[\begin{matrix} 1 & 0 \cr 0 & 1 \cr\end{matrix}\right],
n_9=\left[\begin{matrix} 1 & 1 \cr 0 & 0 \cr\end{matrix}\right],
n_{10}=\left[\begin{matrix} 1 & 1 \cr 1 & 1 \cr\end{matrix}\right]
\end{align*}
and six odd ones
\[
m_1=\left[\begin{matrix} 0 & 1 \cr 0 & 1 \cr\end{matrix}\right], \,
m_2=\left[\begin{matrix} 0 & 1 \cr 1 & 1 \cr\end{matrix}\right], \,
m_3=\left[\begin{matrix} 1 & 0 \cr 1 & 0 \cr\end{matrix}\right], \,
m_4=\left[\begin{matrix} 1 & 0 \cr 1 & 1 \cr\end{matrix}\right], \,
m_5=\left[\begin{matrix} 1 & 1 \cr 0 & 1 \cr\end{matrix}\right], \,
m_6=\left[\begin{matrix} 1 & 1 \cr 1 & 0 \cr\end{matrix}\right].
\]
We ordered these characteristics in the same way 
as in \cite[Section 3]{CvdGG}, that is, lexicographically.
For an even characteristic $n_i$, let us simply denote the theta constant
$\vartheta_{n_i}$ by
\[
\vartheta_i(\tau)=\vartheta_{n_i}(\tau,0).
\]
An even theta characteristic can be written in two ways as a sum of three different odd theta
characteristics. This establishes a bijection between the even theta characteristics and the
partitions of $\{1,2,\ldots,6\}$ in two triples:
$$
\begin{matrix}
n_1 \, (146)(235) & n_2 \, (136)(245) & n_3 \, (135)(246) & n_4 \, (145)(236) & n_5 \, (134)(256)\\
n_6 \, (156)(234) & n_7 \, (123)(456) & n_8 \, (124)(356) & n_9 \, (126)(345) & n_{10} \, (125)(346)\\
\end{matrix}
$$
The complement $\mathcal{A}_{1,1}[2]$ of $\mathcal{M}_2[2]$ in $\mathcal{A}_2[2]$ is a divisor and consists of ten
irreducible components with each component corresponding to an even theta characteristic and also to a
partitition $(abc)(def)$ of $\{1,2,\ldots,6\}$. (These irreducible divisors are denoted $H_{\pi}$ for a partition $\pi$, see
Section 4.)

\bigskip
We need the Fourier expansions of the gradients $G_i$ of the odd theta functions.
A direct manipulation of (\ref{ThetaCharac}) leads to
\[
\vartheta_{
\left[
\begin{smallmatrix}
\mu \\ \nu
\end{smallmatrix}
\right]}
(\tau,z)=
\sum_{(n_1,n_2)\in {\ZZ}^2}
i^{(2n_1+\mu_1)\nu_1+(2n_2+\mu_2)\nu_2}
\zeta_1^{2n_1+\mu_1}
\zeta_2^{2n_2+\mu_2}
Q_1^{(2n_1+\mu_1)^2}
Q_{12}^{(2n_1+\mu_1)(2n_2+\mu_2)}
Q_2^{(2n_2+\mu_2)^2}
\]
where for
$
\tau=
\left(
\begin{smallmatrix}\tau_1 & \tau_{12} \\ \tau_{12} & \tau_2\end{smallmatrix}
\right)\in \mathfrak{H}_2$
and
$
z=(z_1,z_2)\in {\CC}^2
$,
we set
\[
Q_1=e^{\pi i \tau_1/4},\quad
Q_{12}=e^{\pi i \tau_{12}/2},\quad
Q_2=e^{\pi i \tau_2/4},\quad
\zeta_1=e^{\pi i z_1},\quad
\zeta_2=e^{\pi i z_2}.
\]
We easily deduce the Fourier expansion of the ten even theta constants
\begin{equation}\label{FE_Even_Theta}
\vartheta_{
\left[
\begin{smallmatrix}
\mu \\ \nu
\end{smallmatrix}
\right]}(\tau)
=
i^{\mu_1\nu_1+\mu_2\nu_2}
\hspace{-15pt}
\sum_{n=(n_1,n_2)\in {\ZZ}^2}
\hspace{-15pt}
(-1)^{n_1\nu_1+n_2\nu_2}
Q_1^{(2n_1+\mu_1)^2}
Q_{12}^{(2n_1+\mu_1)(2n_2+\mu_2)}
Q_2^{(2n_2+\mu_2)^2} \, .
\end{equation}
We put 
$$
\chi_5= -2^{-6} \prod_{i=1}^{10} \vartheta_i = (Q_{12}^2-Q_{12}^{-2})(Q_1Q_2)^4+\ldots \, .
$$
This is a modular form of level $1$ with a (quadratic) character.

For an odd characteristic $m_i$, we denote by $G_i$ the (normalised) gradient of the theta series $\vartheta_{m_i}$ evaluated at $(z_1,z_2)=0$
\begin{equation}\label{GradTheta}
G_i(\tau)=
\left[\begin{smallmatrix} 
G_{i,1}(\tau)\\G_{i,2}(\tau)
\end{smallmatrix}\right]=
\frac{1}{\pi i}\nabla\vartheta_{m_i}(\tau)=
\frac{1}{\pi i}\left[\begin{smallmatrix} 
\partial \vartheta_{m_i} /\partial z_1\\
\partial \vartheta_{m_i} /\partial z_2
\end{smallmatrix}\right](\tau,0).
\end{equation}
We normalised these gradients to avoid powers of $\pi$ in the sequel; the Fourier expansions of their components are given by
 \begin{equation}\label{FE_Odd_Theta}
G_{i,j}(\tau)= i^{\mu_1\nu_1+\mu_2\nu_2}
\hspace{-14pt}
\sum_{n=(n_1,n_2)\in {\ZZ}^2}
\hspace{-8pt}
(-1)^{n_1\nu_1+n_2\nu_2}
(2n_j+\mu_j)
Q_1^{(2n_1+\mu_1)^2}
Q_{12}^{(2n_1+\mu_1)(2n_2+\mu_2)}
Q_2^{(2n_2+\mu_2)^2}
\end{equation}
These gradients are sections of ${\EE}\otimes \det({\EE})^{1/2}$ on the moduli space
$\mathcal{A}_2[4,8]$ of abelian surfaces with a level $(4,8)$-structure.

\bigskip

One way to retrieve a binary sextic defining a curve $C$ of genus $2$ from its Jacobian ${\rm Jac}(C)$ 
(in characteristic not $2$)
is by looking at the sixteen symmetric theta divisors on ${\rm Jac}(C)$.
There are precisely six such theta divisors passing through the origin. The tangent lines
to these six theta divisors define six lines in the $2$-dimensional tangent space
to ${\rm Jac}(C)$ at the origin. The six odd theta functions have as their divisors
these six theta divisors and their gradients define these six lines in the tangent space.
So up to a power of $\chi_5$ these six $G_i$ define the tautological form $\chi_{6,-2}$.
This is in line with the fact that $\chi_5\,  {\rm Sym}^6(G_1,\ldots,G_6)$ is a cusp form of weight $(6,8)$
as shown in \cite{CvdGG}. Let $S_{j,k}(\Gamma_2)$ be the space of cusp forms of degree $2$
on $\Gamma_2={\rm Sp}(4,{\ZZ})$. We also write $M_{j,k}(\Gamma_2[2])$ and $S_{j,k}(\Gamma_2[2])$
for the spaces of modular forms and cusp forms on $\mathcal{A}_2[2]$, that is, on the congruence subgroup
$\Gamma_2[2]$ of level $2$.

\begin{proposition}
We have
$\chi_5\, {\rm Sym}^6(G_1,\ldots,G_6) \in S_{6,8}(\Gamma_2)$.
\end{proposition}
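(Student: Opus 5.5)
Here ${\rm Sym}^6(G_1,\ldots,G_6)$ means the symmetrized product of the six vectors, i.e.\ the binary sextic $\prod_{i=1}^6 (G_{i,1}x_1+G_{i,2}x_2)$. The plan is to show three things about $F=\chi_5\prod_{i=1}^6 (G_{i,1}x_1+G_{i,2}x_2)$: it transforms with the correct weight under all of $\Gamma_2$ with trivial character, it is holomorphic, and it vanishes at the cusps.

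\emph{Weight.} Each $G_i$ is a section of ${\EE}\otimes\det({\EE})^{1/2}$ on $\mathcal{A}_2[4,8]$. The product of six such linear forms is therefore a section of ${\rm Sym}^6({\EE})\otimes\det({\EE})^{3}$, up to a multiplier. Multiplying by $\chi_5$ gives weight $(6,3+5)=(6,8)$.

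\emph{Invariance under $\Gamma_2$.} Here I use the theta transformation formula \cite[Chapter V]{IgusaBook}. For $\gamma=\left(\begin{smallmatrix}a&b\\c&d\end{smallmatrix}\right)\in\Gamma_2$ it reads
$$
\vartheta_{\gamma\cdot m}(\gamma\tau,(c\tau+d)^{-t}z)=\kappa(\gamma)\,e(\phi_m(\gamma))\det(c\tau+d)^{1/2}e^{\pi i z(c\tau+d)^{-1}cz^t}\vartheta_m(\tau,z).
$$
Here $\kappa(\gamma)$ is an eighth root of unity independent of $m$, and $\phi_m(\gamma)$ depends on $m$. Differentiating at $z=0$ kills the quadratic exponential term, since $\vartheta_m(\tau,0)=0$ for odd $m$. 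So $G_{\gamma m}(\gamma\tau)=\kappa\, e(\phi_m)\det(c\tau+d)^{1/2}(c\tau+d)G_m(\tau)$. The group $\Gamma_2$ permutes the six odd characteristics via ${\rm Sp}(4,\ZZ/2\ZZ)\cong\mathfrak S_6$, so the product of the six $G_i$ is permuted into itself. The result is
$$
\prod_i G_i(\gamma\tau)=\kappa^6\, e\Big(\textstyle\sum_{m\ \rm odd}\phi_m(\gamma)\Big)\det(c\tau+d)^3\,{\rm Sym}^6(c\tau+d)\prod_i G_i(\tau).
$$
Similarly $\chi_5(\gamma\tau)=\kappa^{10}e(\sum_{n\ \rm even}\phi_n)\det(c\tau+d)^5\chi_5(\tau)$. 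So $F$ transforms with the total multiplier $\kappa^{16}e(\sum_{\rm all\ 16}\phi_m)$.

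\emph{Main obstacle: the character.} The key step is showing this total multiplier is trivial. Since $\kappa^8=1$, we get $\kappa^{16}=1$. For the exponential factor I check generators of $\Gamma_2$, namely $J=\left(\begin{smallmatrix}0&-1\\1&0\end{smallmatrix}\right)$ and the translations $\left(\begin{smallmatrix}1&s\\0&1\end{smallmatrix}\right)$ with $s$ symmetric integral, and verify that the sum of $\phi_m$ over all sixteen characteristics is an integer. For $J$ all $\phi_m$ vanish. For translations, $\phi_m$ is $-\tfrac18\mu s\mu^t$ plus a linear term, and summing over the sixteen characteristics gives an integer; I would check this by direct enumeration. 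Alternatively, the known statement that $\chi_5$ has the quadratic character and ${\rm Sym}^6(G)$ has the same one could be cited.

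A cleaner cross-check of triviality uses the Fourier expansions \eqref{FE_Even_Theta} and \eqref{FE_Odd_Theta}. A character on $\Gamma_2$ is trivial on the translations exactly when the expansion of $F$ involves only integral powers of $e^{2\pi i\tau_1},e^{2\pi i\tau_{12}},e^{2\pi i\tau_2}$, i.e.\ only exponents of $Q_1,Q_2$ divisible by $8$ and of $Q_{12}$ divisible by $4$ (using $Q_{12}^2-Q_{12}^{-2}$ symmetry). I expect this to follow from parity counting: each $\mu$-component is odd in exactly three of the six odd characteristics $m_i$ and in four of the ten even ones. Since the abelianization of $\Gamma_2$ is $\ZZ/2$, detected on translations, trivial translation behaviour suffices.

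\emph{Holomorphy and cuspidality.} Thetas and their derivatives are holomorphic on $\mathfrak H_2$, so $F$ is holomorphic; by Koecher it is then a modular form. For cuspidality it suffices, by level one, to check the leading expansion at the standard cusp, i.e.\ that the $\Phi$-operator applied to $F$ vanishes. The factor $\chi_5=(Q_{12}^2-Q_{12}^{-2})(Q_1Q_2)^4+\ldots$ already carries positive powers of both $Q_1$ and $Q_2$, while the $G_i$ have nonnegative exponents. Hence every Fourier term has positive $Q_1$- and $Q_2$-exponents, and $F$ is a cusp form in $S_{6,8}(\Gamma_2)$.
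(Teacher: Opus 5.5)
The paper does not prove this proposition itself. It quotes it from \cite{CvdGG}, and in Section 4 it ties ${\rm Sym}^6(G_1,\ldots,G_6)$ to the divisor $\sum_i W_i$ via \cite{vdG-K}. Your argument is therefore a genuinely different, self-contained route, and after the fixes below it is correct.

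You apply Igusa's transformation formula to all sixteen thetas at once, so only the total multiplier $\kappa^{16}e(\sum_m\phi_m)$ has to be shown trivial. You then invoke Koecher and read off cuspidality from the theta factors: $\vartheta_7,\ldots,\vartheta_{10}$ force every term of $\chi_5$ to have $Q_1$-exponent at least $4$, and $\vartheta_5,\vartheta_6,\vartheta_9,\vartheta_{10}$ do the same for $Q_2$. The advantage is that you never identify the characters of $\chi_5$ and ${\rm Sym}^6(G_1,\ldots,G_6)$ separately, which is exactly the point the citation black-boxes. Your remark that $\Gamma_2^{\rm ab}\cong\ZZ/2\ZZ$ is detected on translations even makes the check at $J$ unnecessary.

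Three details need fixing.

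First, it is not true that all $\phi_m(J)$ vanish. Igusa's formula gives $\phi_m(J)=-\tfrac14\,\mu\nu^t$, which is $-\tfrac14$ for each odd $m$ and $-\tfrac12$ for $n_{10}$. Already in genus one the odd theta function picks up an extra factor $-i$ relative to the even ones under $\tau\mapsto-1/\tau$. What you actually need, and what holds, is $\sum_m\phi_m(J)=-2\in\ZZ$.

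Second, for the translations $T_s$ the enumeration gives $\phi_m(T_s)=\tfrac18(s_1\mu_1+s_2\mu_2)-\tfrac14 s_{12}\mu_1\mu_2$. Summed over the sixteen characteristics this is $s_1+s_2-s_{12}\in\ZZ$. You must also include the signs from \eqref{AdditionCharac} that arise when reducing $\gamma\cdot m$ modulo $2$; here $\nu\mapsto\nu-\mu s+s_0$. These signs cancel, because for fixed $\mu$ the four choices of $\nu$ contribute an even total exponent.

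Third, drop the parity ``cross-check''. Four, not three, of the six odd characteristics have $\mu_1=1$. More importantly, no termwise count can work: a factor with $\mu_j=0$ contributes $Q_j^{4n^2}$, and $4n^2\equiv 4 \bmod 8$ for odd $n$. Integrality of the exponents only appears after cancellation between terms.
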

We know that $\dim S_{6,8}(\Gamma_2)=1$.
In particular we see that ${\rm Sym}^6(G_1,\ldots,G_6)/\chi_5$ equals a multiple
of $\chi_{6,-2}$.

As these six lines are given by their Pl\"ucker coordinates, we set
$$
\tilde{p}_{ij}=G_i\wedge G_j=G_{i1}G_{j2}-G_{i2}G_{j1} \quad (1\leq i < j \leq 6)\, .
$$
It is appropriate to describe the action of the symmetric group $\mathfrak{S}_6$.
The action of $\sigma\in \mathfrak{S}_6$ is via $\sigma(\tilde{p}_{ij})=\tilde{p}_{\sigma(i) \sigma(j)}$.
The action on the $\vartheta_i=\vartheta_{n_i}$ is determined by the action on the 
characteristics and equivalently, on the partitions $n_i$ up to an eighth root of $1$.

We know that an unordered pair of odd theta characteristics $(a,b)$
determines a quadruple of even theta characteristics, namely the four even theta characteristics
such that the corresponding partition of $6$ is
of the form $(abc)(def)$,
see above or \cite[Lemma 3.1]{CvdGG}.
Accordingly, by a small computation we find
$$
\tilde{p}_{12}= \vartheta_7\vartheta_8\vartheta_9 \vartheta_{10} \, .
$$
This is in agreement with \cite[Lemma 5.1]{F-SM2015}.
Note that for the action of $\mathfrak{S}_6\cong {\rm Sp}(4,{\ZZ}/2{\ZZ})$ we have
$ \sigma(\tilde{p}_{12})= \vartheta_7\vartheta_8\vartheta_9 \vartheta_{10}{|_2 \sigma^{-1}}$
for $\sigma \in \mathfrak{S}_6$ where the $|_2\sigma^{-1}$ denotes the usual slash in weight $2$.
We observe that 
$$
\prod_{1 \leq i < j \leq 6} \tilde{p}_{ij}= - 2^{36} \chi_5^6\, .
$$

\end{section}
\begin{section}{Modular forms from geometry}
We now show how to obtain scalar-valued and vector-valued modular forms 
from geometry, more precisely from effective divisors as in \cite{vdG-K}.
The Hodge bundle ${\EE}$ of rank~$2$ over $\mathcal{A}_2[2]$ extends over $\tilde{\mathcal{A}}_2[2]$. 
We will use the same notation for the extension. Its projectivization ${\PP}({\EE})$ is a ${\PP}^1$-bundle
$\varpi: {\PP}({\EE})\to \tilde{\mathcal{A}}_2[2]$.
The six Weierstrass points in the universal curve over $\mathcal{M}_2[2]$ define six sections of
${\PP}({\EE})$ over $\mathcal{M}_2[2]$ via the canonical morphism. We can take their closures and thus obtain six divisors $W_i$
with $i=1,\ldots,6$ in ${\PP}({\EE})$ over $\tilde{\mathcal{A}}_2[2]$. 
Their divisor classes, also denoted $W_i$, can be written as
$$
W_i= h+\varpi^*(A_i)
$$
with $h=c_1(\mathcal{O}_{{\PP}({\EE})}(1))$ the first Chern class and $A_i$ a divisor class on $\tilde{\mathcal{A}}_2[2]$.
We use Grothendieck's interpretation for ${\PP}({\EE})$; of course ${\PP}({\EE})\cong {\PP}({\EE}^{\vee})$, but the $\mathcal{O}(1)$
depends on the interpretation.

The Picard group with rational coefficients of  $\tilde{\mathcal{A}}_2[2]$ has rank $16$ and is generated by the
$\lambda=c_1({\EE})$ and the $15$ boundary components $D_{ij}$ of $\tilde{\mathcal{A}}_2[2]$ with $1\leq i < j \leq 6$.
The action of $\sigma \in \mathfrak{S}_6$ on the $D_{ij}$ is by $D_{ij} \mapsto D_{\sigma(i) \sigma(j)}$.

The closure of the locus $\mathcal{A}_{1,1}[2]$ of products of elliptic curves consists 
of ten irreducible components $H_{\pi}$ of $\mathcal{A}_{1,1}[2]$ corresponding to the ten partitions $\pi=(abc)(def)$ of 
$\{1,2,\ldots,6\}$.

In line with the moduli of curves interpretation we will use the notation
$$
\delta_0=\sum_{1 \leq i < j \leq 6} D_{ij}, \quad \delta_1=\sum_{\pi} H_{\pi} \, .
$$

The effective divisor $H_{\pi}$ can be expressed in our basis as follows.
\begin{equation} \label{tenrelations}
4\, H_{(abc)(def)}= 2\, \lambda - \left( D_{ab}+D_{ac}+D_{bc}+D_{de}+D_{df}+D_{ef} \right) \, . 
\end{equation}
This may be deduced from the Picard group of $\overline{\mathcal{M}}_{0,6}$ or just by looking
at the zero divisor of the fourth power of an even theta characteristic, see Section \ref{theta}.

The divisor of the modular form $\chi_5$
gives the relation
$$
5\, \lambda = \delta_1 +\delta_0\, .
$$
Define for $i=1,\ldots,6$ a splitting of $\sum D_{ij}$ in a sum of six and a sum of nine divisors
$$
\Delta_i= \sum_{1\leq j \leq 6, j\neq i} D_{ij} \qquad \text{\rm and} \qquad  \Delta_i^{\prime}= \delta_0 -\Delta_i
$$

\begin{lemma}
We have in ${\rm Pic}_{\QQ}(\tilde{\mathcal{A}}_2[2])$ that 
$A_i=\lambda/2 - \alpha \Delta_i - \alpha^{\prime} \Delta_i^{\prime}$ with $\alpha+2 \alpha'=1/2$. 
\end{lemma}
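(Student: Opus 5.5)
We need to determine $A_i$ with $W_i = h + \varpi^*(A_i)$. The plan is to pin down the coefficients of $A_i$ in the basis $\lambda, D_{jk}$ using three ingredients: the $\mathfrak{S}_6$-symmetry, a pairwise disjointness of the sections $W_i$ over the interior, and the degeneration behaviour near the boundary (and along $\mathcal{A}_{1,1}[2]$).

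\textbf{Symmetry.} The stabilizer of $i$ in $\mathfrak{S}_6$ is $\mathfrak{S}_5$. It fixes $W_i$, fixes $\lambda$, and permutes the $D_{jk}$ in two orbits: the five $D_{ij}$, which sum to $\Delta_i$, and the ten $D_{jk}$ with $j,k\neq i$, which sum to $\Delta_i'$. So, after averaging over $\mathfrak{S}_5$ in ${\rm Pic}_{\QQ}$ (the Picard group is free on $\lambda$ and the $D_{jk}$), we get
$$A_i = c\,\lambda - \alpha\,\Delta_i - \alpha'\,\Delta_i'$$
for some rationals $c, \alpha, \alpha'$. It remains to show $c=1/2$ and $\alpha+2\alpha'=1/2$.

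\textbf{Determining $c$.} Here I would use the sextic. The tautological form $\chi_{6,-2}$ is a meromorphic section of ${\rm Sym}^6(\EE)\otimes\det(\EE)^{-2}$, and its zero locus in $\PP(\EE)$ over $\mathcal{M}_2[2]$ is exactly $\sum_i W_i$. On $\PP(\EE)$ we have $\varpi_*\mathcal{O}(6)={\rm Sym}^6\EE$ (with the appropriate Grothendieck convention), so
$$\sum_i W_i \equiv 6h - 2\varpi^*\lambda + \varpi^*(\text{boundary correction}).$$

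Alternatively, and more cleanly, use the product $G_1\cdots G_6$. Each $G_i$ is a section of $\EE\otimes\det(\EE)^{1/2}$, and it vanishes in $\PP(\EE)$ exactly along $W_i$ over $\mathcal{M}_2[2]$, since $G_i$ gives the tangent line of the $i$-th odd theta divisor, i.e.\ the Weierstrass point. Viewing $G_i$ as a section of $\mathcal{O}(1)\otimes\varpi^*\det(\EE)^{1/2}$ on $\PP(\EE)$ gives
$$W_i = h + \lambda/2 - (\text{vertical zeros of } G_i).$$
This gives $c=1/2$ once the vertical part is supported on the boundary $D_{jk}$. The vertical part has no $H_\pi$ component, because on $\mathcal{A}_{1,1}$ the gradients are still nonzero. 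That the $H_\pi$ do not enter is consistent with relation (\ref{tenrelations}), but we must be careful: the $H_\pi$ are expressible through $\lambda$ and the $D$'s, so "no $H_\pi$ in the formula" is itself information. I would instead only use the conclusion that the $W_i$ differ from $h+\lambda/2$ by boundary classes $D_{jk}$. That argument gives exact vanishing orders of $G_i$ along $D_{jk}$, which would determine $\alpha,\alpha'$ completely. The lemma leaves them undetermined (only $\alpha+2\alpha'=1/2$), which suggests the closure $W_i$ in a non-fine compactification is only defined up to this ambiguity.

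\textbf{The linear constraint.} To get $\alpha+2\alpha'=1/2$, I would use the pairwise intersections. Since the sections are disjoint over $\mathcal{M}_2[2]$, $W_i\cdot W_j$ pushes forward to a boundary class, and
$$\varpi_*(W_i W_j) = A_i + A_j + c_1$$
(using $h^2 = \lambda h - c_2$ in suitable sign conventions, so $\varpi_*(h^2)=\lambda$). Over $\mathcal{A}_{1,1}$ the sections remain disjoint, so $\varpi_*(W_iW_j)=\lambda+A_i+A_j$ must be a combination of the $D_{kl}$ with no $H_\pi$ contribution. Now rewrite $\lambda$ via $5\lambda=\delta_0+\delta_1$ and (\ref{tenrelations}): summing (\ref{tenrelations}) over the ten partitions gives $4\delta_1 = 20\lambda - 4\delta_0$, since each $D_{kl}$ occurs in four partitions. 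Then require that the resulting class is supported on $D_{ij}$ alone (or satisfies an $\mathfrak{S}_6$-invariant test). Equivalently, one can test degrees on suitable test curves in $H_\pi$, where $\lambda$ and the $D$'s have computable degrees and $W_i$ restricts to known sections.

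\textbf{Main obstacle.} The hardest step is choosing a test relation that genuinely yields $\alpha+2\alpha'=1/2$ rather than determining $\alpha,\alpha'$ separately. I would first try restricting to a component $H_{(abc)(def)}$ with $i=a$. On $H_\pi\cong$ (a product of level-2 modular curves), $\EE$ splits as $\EE_1\oplus\EE_2$, and $W_a$ becomes the section given by the first factor. So $W_a|_{H_\pi}$ is determined: $A_a|_{H_\pi}=\lambda_1$-type class. Comparing this with $\lambda/2-\alpha\Delta_a-\alpha'\Delta_a'$ restricted to $H_\pi$, where $D_{ab},D_{ac}$ lie in $\Delta_a$, the others count via $\Delta_a'$, and restrictions are computed from (\ref{tenrelations}), should produce exactly the single relation $\alpha+2\alpha'=1/2$.
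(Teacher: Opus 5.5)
\textbf{There is a genuine gap: the step meant to give $\alpha+2\alpha'=1/2$ rests on a false premise.}

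Two parts of your proposal are fine. Your $\mathfrak{S}_5$-symmetry reduction is the paper's. Your argument for $c=1/2$ is also sound if you run it with ${\rm Sym}^4(G_i)$, a form of weight $(4,2)$ on $\Gamma_2[2]$; the $G_i$ themselves only live in level $(4,8)$. Its zero divisor on $\PP(\EE)$ is $4W_i$ plus vertical components. Since the odd theta gradients vanish nowhere on $\mathcal{A}_2[2]$, these vertical components lie over the $D_{kl}$ only.

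The premise that fails is that $W_i$ and $W_j$ stay disjoint over $\mathcal{A}_{1,1}[2]$.
\begin{itemize}
\item Over $H_{(abc)(def)}$ the curve becomes $E_1\cup E_2$, with $p_a,p_b,p_c$ on one component. The canonical map sends each elliptic component to a single point. So the sections $W_a,W_b,W_c$ coincide there, as do $W_d,W_e,W_f$.
\item Analytically, $G_a,G_b,G_c$ become proportional on that component. For example, $G_1\wedge G_2=\vartheta_7\vartheta_8\vartheta_9\vartheta_{10}$ vanishes on the four $H_\pi$ having $1,2$ in one triple.
\item Your test is in fact self-contradictory. In the basis $\lambda, D_{kl}$ of ${\rm Pic}_{\QQ}$, the class $\lambda+A_i+A_j$ has $\lambda$-coefficient $1+2c=2\neq 0$. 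So it is never a combination of the $D_{kl}$, and rewriting $\lambda$ via $5\lambda=\delta_0+\delta_1$ does not change that.
\end{itemize}
Your fallback of restricting to $H_\pi$ is only a sketch. Relation (\ref{tenrelations}) does not supply the restrictions $D_{kl}|_{H_\pi}$ you would need: restricting it to $H_\pi$ only brings in $H_{\pi'}|_{H_\pi}$ and the normal bundle of $H_\pi$.

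The missing idea is to symmetrize over all of $\mathfrak{S}_6$, i.e.\ to use the class of $\sum_i W_i$. This divisor defines $\chi_{6,3}={\rm Sym}^6(G_1,\ldots,G_6)$, and its class is known from \cite[Section 2]{vdG-K}: $\sum_i A_i=8\lambda-2\delta_0-\delta_1=3\lambda-\delta_0$. The factor $2$ in front of $\delta_0$ comes from ramification over level one. Since $\sum_i\Delta_i=2\delta_0$ and $\sum_i\Delta_i'=4\delta_0$, comparing coefficients gives $6c=3$ and $2\alpha+4\alpha'=1$ at once; this is the paper's proof.

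This also explains why only $\alpha+2\alpha'$ appears: a symmetric input sees only the $\mathfrak{S}_6$-invariant part of $A_i$. It is not an ambiguity in the closure $W_i$, as you speculated; Proposition \ref{Wiclass} later fixes $(\alpha,\alpha')=(0,1/4)$.

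Alternatively, you could have finished the vanishing-order computation you mention yourself. By (\ref{FE_Odd_Theta}), consider the boundary component $\tau_1\to i\infty$; this is $D_{12}$, since $\vartheta_7,\ldots,\vartheta_{10}$ are the even thetas vanishing there. Along it, exactly the $G_m$ with $\mu_1=1$, namely $G_3,\ldots,G_6$, vanish, to order $1/4$ in the local parameter $e^{\pi i\tau_1}$. By symmetry ${\rm Sym}^4(G_i)$ vanishes simply along exactly $\Delta_i'$. Hence $4A_i=2\lambda-\Delta_i'$, which implies the lemma.
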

\begin{proof}
Since $A_i$ is invariant under the subgroup $\mathfrak{S}_5$ of $\mathfrak{S}_6$ fixing $i$ and $\Delta_i$ and $\Delta_i^{\prime}$
generate the $\mathfrak{S}_5$-invariant subspace of the space generated by the $D_{ij}$, we see that $A_i$ is equivalent to
$n\lambda - \alpha \Delta_i - \alpha^{\prime} \Delta_i^{\prime}$ for some $\alpha,\alpha'$ and $n$. 
We know by \cite[Section 2]{vdG-K} 
that
$$
A_1+\ldots + A_6= 8\lambda -2 \delta_0-\delta_1= 3\lambda-\delta_0\, ,
$$
where we note that the coefficient $2$ of $\delta_0$ is due to ramification when going from level $1$ to $2$.
We see that $6n=3$ and because $\sum_i \Delta_i= 2\delta_0$, $\sum_i \Delta_i'=4\delta_0$ we get $\alpha + 2 \alpha'=1/2$.
\end{proof}
Now we use the fact that 
$\varpi_*(\mathcal{O}_{{\PP}({\EE})}(j))\cong {\rm Sym}^j({\EE})$. 
\begin{observation} \label{opmerking}
If we have an effective divisor $F$
on ${\PP}({\EE})$ with divisor class
\begin{equation}
F= j \, h +\varpi^*(k\, \lambda - \sum_{ij} m_{ij} D_{ij})
\end{equation}
with non-negative integers $j$, $k$ and $m_{ij}$, 
then the image under $\varpi_*$ of the canonical section $1$ of $\mathcal{O}_{\PP({\EE})}(j)$ is a section
of ${\rm Sym}^j({\EE}) \otimes \det({\EE})^{\otimes k}$ vanishing with multiplicity $m_{ij}$ on $D_{ij}$.
In fact, the non-negativity of the $m_{ij}$ is not needed by the Koecher Principle.
\end{observation}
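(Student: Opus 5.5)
The plan is to reduce the statement to two standard facts: the projection formula for $\varpi$, and the identification $\varpi_*\mathcal{O}_{{\PP}({\EE})}(j)\cong{\rm Sym}^j({\EE})$ recalled just before the Observation. I will also use $\mathcal{O}_{\tilde{\mathcal{A}}_2[2]}(\lambda)=\det({\EE})$, and the Koecher principle to remove the sign condition on the $m_{ij}$.

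First, since $F$ is effective, its canonical section $s_F$ is a section of the line bundle
$$
\mathcal{O}(F)\cong \mathcal{O}_{{\PP}({\EE})}(j)\otimes \varpi^*\bigl(\det({\EE})^{\otimes k}\otimes \mathcal{O}(-\textstyle\sum m_{ij}D_{ij})\bigr).
$$
Working on the stack, or after passing to a fine level cover and taking invariants, this isomorphism is an isomorphism of line bundles and not just of classes, because the Picard group of ${\PP}({\EE})$ is ${\ZZ}h\oplus\varpi^*{\rm Pic}$. Applying $\varpi_*$ and the projection formula gives
$$
\varpi_* s_F\in H^0\bigl(\tilde{\mathcal{A}}_2[2],\,{\rm Sym}^j({\EE})\otimes\det({\EE})^{\otimes k}\otimes\mathcal{O}(-\textstyle\sum m_{ij}D_{ij})\bigr).
$$
When all $m_{ij}\ge 0$, the bundle $\mathcal{O}(-\sum m_{ij}D_{ij})$ is a subsheaf of $\mathcal{O}$. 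So this is a section of ${\rm Sym}^j({\EE})\otimes\det({\EE})^{\otimes k}$ on $\tilde{\mathcal{A}}_2[2]$ vanishing to order at least $m_{ij}$ along $D_{ij}$. Restricting it to $\mathcal{A}_2[2]$ gives a modular form of weight $(j,k)$.

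The one point that needs care is that the order is exactly $m_{ij}$ and not larger. To see this, I will check at the generic point of $D_{ij}$, where ${\EE}$ is trivialized locally. There $s_F$ is a degree-$j$ form on the fibres, and its divisor is $F$. If $\varpi_*s_F$ vanished to order greater than $m_{ij}$, then $F$ would contain $\varpi^{-1}(D_{ij})$ with positive multiplicity in its class decomposition, which contradicts taking the class as written. So we either interpret ``vanishing with multiplicity $m_{ij}$'' as at least $m_{ij}$, or assume $F$ has no vertical components; I will adopt the latter.

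For negative $m_{ij}$ the construction still gives a section over $\mathcal{A}_2[2]$, which is the complement of the boundary, and it is meromorphic along the $D_{ij}$. The boundary has codimension $\ge 2$ in the Satake compactification. Hence the Koecher principle for vector-valued Siegel modular forms of degree $2$ shows that such a section is automatically holomorphic at the cusps. It therefore extends holomorphically, and the apparent pole orders must be $\le 0$. The main obstacle I expect is making the projection-formula step rigorous at the stacky points and along the boundary. I would handle this by working on a smooth toroidal compactification of a fine cover such as $\mathcal{A}_2[4]$ and descending by invariants.
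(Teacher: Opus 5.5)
Your argument is correct and is essentially the paper's own justification: the Observation rests only on $\varpi_*\mathcal{O}_{\PP(\EE)}(j)\cong{\rm Sym}^j(\EE)$ together with the projection formula, with the Koecher principle invoked for the last sentence.

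Your caveat about vertical components is a genuine sharpening of the statement. The order of $\varpi_*s_F$ along $D_{ij}$ is exactly $m_{ij}$ plus the multiplicity $e_{ij}$ of $\varpi^{-1}(D_{ij})$ in $F$. Such components therefore do not ``contradict the class'' as you write; they simply raise the order. Combined with Koecher, this shows that $m_{ij}<0$ forces $e_{ij}\geq -m_{ij}$. So in the negative case what survives is only the holomorphy of $\varpi_*s_F$ as a form of weight $(j,k)$, not a literal order $m_{ij}$ along $D_{ij}$.
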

We know by \cite{vdG-K} that the divisor $\sum_{i=1}^6 W_i$ on ${\PP}({\EE})$ defines the modular 
form $\chi_{6,3}={\rm Sym}^6(G_1,\ldots,G_6)$
with $G_i$ the gradient of the odd theta function $\vartheta_{m_i}$ as defined in \cite{CvdGG}, see above in Section \ref{theta}.
We also know by \cite[Prop.\ 23.2]{CvdGG} that ${\rm Sym}^4(G_i)$ defines a modular form of weight $(4,2)$. This implies that the $\alpha$ and $\alpha'$
occurring in 
$A_i=\lambda/2 - \alpha \Delta_i - \alpha^{\prime} \Delta_i^{\prime}$ satisfy $4\alpha \in {\ZZ}$, $4\alpha' \in {\ZZ}$.

\begin{proposition}\label{Wiclass} In ${\rm Pic}_{\QQ}({\PP}({\EE}))$ we have $W_i= h+\varpi^*( \lambda/2 -\Delta'_i/4)$.
\end{proposition}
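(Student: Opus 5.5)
The plan is to pin down the two unknowns $\alpha,\alpha'$ in the Lemma by computing one vanishing order, using the modular form attached to $W_i$ itself. By the Lemma, $W_i=h+\varpi^*(\lambda/2-\alpha\Delta_i-\alpha'\Delta_i')$ with $\alpha+2\alpha'=1/2$. So it suffices to show $\alpha=0$; then $\alpha'=1/4$ follows.

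\emph{Step 1: identify the form attached to $W_i$.} Over $\mathcal{M}_2[2]$ the section $W_i$ of ${\PP}({\EE})$ is the tangent line to the odd theta divisor of $\vartheta_{m_i}$ at the origin. This line is spanned by (or, in the dual picture, annihilated by) the gradient $G_i$, as recalled in Section~\ref{theta}. Hence $4W_i$ is the divisor of the section ${\rm Sym}^4(G_i)$, which is a modular form of weight $(4,2)$ by \cite[Prop.\ 23.2]{CvdGG}. This is consistent with the class $4h+\varpi^*(2\lambda-4\alpha\Delta_i-4\alpha'\Delta_i')$ and with Observation~\ref{opmerking}. Comparing the two descriptions over $\mathcal{M}_2[2]$, they can differ only by boundary divisors. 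So $4\alpha$, respectively $4\alpha'$, equals the vanishing order of ${\rm Sym}^4(G_i)$ along a boundary component $D_{jk}$ containing $i$, respectively not containing $i$.

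\emph{Step 2: compute the order along one boundary component.} By the $\mathfrak{S}_6$-symmetry, $D_{jk}\mapsto D_{\sigma(j)\sigma(k)}$, it suffices to look at a single boundary component. I take the one at the cusp $\tau_1\to i\infty$, i.e.\ $Q_1\to 0$, and use the Fourier expansion \eqref{FE_Odd_Theta}. For the odd characteristics with $\mu_1=1$, namely $m_3,m_4,m_5,m_6$, every term carries $Q_1^{(2n_1+1)^2}$, so $G_i$ vanishes. In the local parameter $e^{\pi i\tau_1}$ of $\tilde{\mathcal{A}}_2[2]$ at this boundary, the order is $1/4$, and hence ${\rm Sym}^4(G_i)$ vanishes to order $1$. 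For $m_1,m_2$ (with $\mu_1=0$), the $n_1=0$ term of the second component $G_{i,2}$ is a nonzero theta series in $\tau_2$, so the order is $0$.

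Thus this boundary component is the one labelled by the pair $\{1,2\}$, i.e.\ $D_{12}$; one checks this against the labelling via the pair of odd characteristics not vanishing there. For $i\in\{1,2\}$ we have $D_{12}\subset\Delta_i$ and order $0$, so $\alpha=0$. For $i\notin\{1,2\}$ we have $D_{12}\subset\Delta_i'$ and $4\alpha'=1$. This agrees with $\alpha+2\alpha'=1/2$, which serves as a check.

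\emph{Main obstacle.} The delicate points are in Step 1 and the normalisation in Step 2:
\begin{itemize}
\item making sure that $W_i$ (in Grothendieck's convention for $\mathcal{O}(1)$) corresponds exactly to ${\rm Sym}^4(G_i)$, and not to $G_i$ twisted by a further power of $\det{\EE}$ or a boundary correction;
\item normalising the local parameter along $D_{jk}$ at level $2$, including the ramification factor $2$ already seen in the proof of the Lemma, so that the exponent $1/4$ is read off correctly.
\end{itemize}
I would handle the first point by noting that both sides have $\lambda$-coefficient $2$, so the only possible discrepancy is supported on the boundary. I would fix the second by the same normalisation that gives $4H_\pi=2\lambda-(\cdots)$ in \eqref{tenrelations} from the fourth powers of the even theta constants.
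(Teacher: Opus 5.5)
Your proof is correct, but it takes a different route from the paper's.

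\textbf{The paper's route.} The paper never computes a vanishing order. It uses only two structural facts: ${\rm Sym}^4(G_i)\in M_{4,2}(\Gamma_2[2])$, while ${\rm Sym}^2(G_i)$ is not a modular form on $\Gamma_2[2]$ (since $\dim M_{2,1}(\Gamma_2[2])=0$). Via Observation~\ref{opmerking} it turns these into the arithmetic conditions $(4\alpha,4\alpha')\in\ZZ^2$ and $(2\alpha,2\alpha')\notin\ZZ^2$. Together with $\alpha+2\alpha'=1/2$ this is meant to force $(\alpha,\alpha')=(0,1/4)$.

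\textbf{Your route.} You read off $4\alpha$ and $4\alpha'$ directly as the orders of ${\rm Sym}^4(G_i)$ at the cusp $\tau_1\to i\infty$. You identify that cusp correctly with $D_{12}$: the even thetas vanishing there are $\vartheta_7,\ldots,\vartheta_{10}$, which are exactly the $\pi$ with $1,2$ in one block, as \eqref{tenrelations} requires. Your normalisation $Q_1^4=e^{\pi i\tau_1}$ is also consistent, since it gives $\vartheta_\pi^4$ order $1$ there.

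Your remark on the $\lambda$-coefficient is also sound. Each $\varpi^*H_\pi$ would contribute $\lambda/2$, so comparing with the Lemma rules out any $H_\pi$-components in the divisor of ${\rm Sym}^4(G_i)$.

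\textbf{What each approach buys.} Your approach costs some bookkeeping, namely labelling the cusp and choosing the level-$2$ local parameter, which the paper avoids. In return it gives more:
\begin{itemize}
\item It proves part ii) of the Remark after the proposition at the same time.
\item It turns $\alpha+2\alpha'=1/2$ into a consistency check rather than an input.
\item It does not need $\dim M_{2,1}(\Gamma_2[2])=0$.
\item It sidesteps the non-negativity of $\alpha,\alpha'$ that the paper's parity argument uses tacitly. Integrality plus the linear relation alone would also allow, e.g., $(\alpha,\alpha')=(1,-1/4)$.
\end{itemize}
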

\begin{proof} We know that ${\rm Sym}^2(G_i)$ is not a modular form on $\Gamma_2[2]$. 
In fact, $\dim M_{2,1}(\Gamma_2[2])=0$.
In order that $4\, W_i$ defines a modular form, but $2\, W_i$ does not, we need in view of Observation 
\ref{opmerking} that
$$
4\, W_i=4\, h + \varpi^*(2\, \lambda - 4\, \alpha\Delta_i - 4\, \alpha'\Delta_i')
$$
has integral coefficients, but $2\, W_i=2\, h +  \varpi^*(\lambda - 2\alpha\Delta_i - 2\alpha'\Delta_i')$
does not, that is
$(4\, \alpha, 4\, \alpha')\in {\ZZ}\times {\ZZ}$, but $ (2\, \alpha,2\, \alpha') \not\in {\ZZ}\times {\ZZ}$.
This implies by the relation $\alpha+2\, \alpha'=1/2$ that $(\alpha,\alpha')=(0,1/4)$,
that is, $4\, A_i=2\, \lambda - \Delta_i'$.
\end{proof}
\begin{remark} i)
The expression ${\rm Sym}^2(G_i)$ defines a modular form of weight $(2,1)$
with a character on $\Gamma_2[2]$.
ii)  The modular form
${\rm Sym}^4(G_i)$ viewed as a section
of ${\rm Sym}^4({\EE}) \otimes \det({\EE})^{\otimes 2}$ vanishes simply on $\Delta_i'$.
\end{remark}
In view of Observation \ref{opmerking} we now turn our attention to explicit effective
divisors. 
We have for a given partition  ${\pi}=(abc)(def)$ the effective divisor class
$$
H_{\pi}= \frac{1}{4}\left(2\, \lambda- D_{ab}-D_{ac}-D_{bc}-D_{de}-D_{df}-D_{ef}\right)
$$
and for $i=1,\ldots,6$  the effective divisor class
$$
W_i=h+\varpi^*(\lambda/2-\Delta_i^{\prime}/4)\, .
$$
For non-negative integers
 $c_{\pi}$ and $d_i$
we consider the following effective divisor class on ${\PP}({\EE})$ over $\widetilde{\mathcal{A}}_2[2]$
\begin{equation}\label{effective divisorF}
F=\sum_{\pi} c_{\pi} \, \varpi^*(H_{\pi}) + \sum_{i=1}^6 d_i \, W_i\, .
\end{equation}
We can write it in terms of our basis $h,\varpi^*(\lambda), \varpi^*(D_{ij})$ of ${\rm Pic}_{\QQ}({\PP}({\EE}))$ as
\begin{equation}\label{Finbasis}
F=(\sum d_i)h+(\frac{1}{2}\sum_{\pi} c_{\pi}+\frac{1}{2}\sum d_i) \lambda - \sum r_{ij} D_{ij} \, , 
\end{equation}
where with $\{a,b,c,d,e,f\}=\{1,2,3,4,5,6\}$ the $r_{ij}$ are given by
\begin{equation}\label{rijcoeff}
r_{ab}= \frac{1}{4}\left(c_{(abc)(def)}+c_{(abd)(cef)}+c_{(abe)(cdf)}+c_{(abf)(cde)}+d_c+d_d+d_e+d_f   \right)\, .
\end{equation}

Using the fact that $\varpi_*(\mathcal{O}_{{\PP}({\EE})}(j))={\rm Sym}^j({\EE})$ we get the following result.
\begin{proposition}
Suppose the coefficients of the effective divisor $F$ in \ref{Finbasis} are integral and $r_{ij}\geq 0$ for all $(i,j)$. 
If  $s$ is the canonical section $1$ of $O(F)$ then $\varpi_*(s)\in M_{j,k}(\Gamma_2[2])$.
It is a section of ${\rm Sym}^j({\EE})\otimes \det({\EE})^k$
with $j=\sum_i d_i$ and $2 k=\sum_{\pi} c_{\pi}+\sum d_i$ vanishing with multiplicity $r_{ij}$
along $D_{ij}$.
\end{proposition}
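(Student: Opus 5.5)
The plan is to reduce the statement to Observation \ref{opmerking}. The first step is to confirm that $F$ is effective and to compute its class. Each $H_\pi$ is an irreducible effective divisor on $\widetilde{\mathcal{A}}_2[2]$, so $\varpi^*(H_\pi)$ is effective on ${\PP}({\EE})$. Each $W_i$ is effective by construction, being the closure of a Weierstrass section. Since $c_\pi,d_i\geq 0$, the divisor $F$ in \eqref{effective divisorF} is effective. We then substitute the classes: \eqref{tenrelations} for $H_\pi$ and Proposition \ref{Wiclass} for $W_i$.

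Next we collect coefficients. The coefficient of $h$ is $\sum d_i$. The coefficient of $\lambda$ is $\frac12\sum c_\pi+\frac12\sum d_i$. For the coefficient of $D_{ab}$ we count as follows.
\begin{itemize}
\item $H_\pi$ contributes $\frac14$ exactly when $a,b$ lie in the same triple of $\pi$. There are four such partitions, obtained by choosing the third element $c$ of the triple containing $a,b$.
\item $W_i$ contributes $\frac14$ exactly when $D_{ab}\subset\Delta'_i$, that is, when $i\notin\{a,b\}$.
\end{itemize}
This gives \eqref{rijcoeff}, so $F=jh+\varpi^*(k\lambda-\sum r_{ij}D_{ij})$ with $j=\sum d_i$ and $2k=\sum c_\pi+\sum d_i$.

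Then we push forward. The line bundle $\mathcal{O}(F)$ has a canonical section $s$ whose divisor is $F$. Because the coefficients are integral, $\mathcal{O}(F)\cong \mathcal{O}_{{\PP}({\EE})}(j)\otimes\varpi^*(L^{k}\otimes\mathcal{O}(-\sum r_{ij}D_{ij}))$, where $L=\det({\EE})$. The projection formula together with $\varpi_*\mathcal{O}(j)={\rm Sym}^j({\EE})$ gives
\[
\varpi_*s\in H^0\bigl(\widetilde{\mathcal{A}}_2[2],{\rm Sym}^j({\EE})\otimes L^k(-\textstyle\sum r_{ij}D_{ij})\bigr).
\]
Since every $r_{ij}\geq 0$, this is a section of ${\rm Sym}^j({\EE})\otimes L^k$ over the toroidal compactification vanishing to order $r_{ij}$ along $D_{ij}$. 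Restricted to $\mathcal{A}_2[2]$ it is a holomorphic vector-valued modular form, so $\varpi_*s\in M_{j,k}(\Gamma_2[2])$. This is exactly Observation \ref{opmerking}.

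The main obstacle is that the identities for $H_\pi$ and $W_i$ hold only in ${\rm Pic}_{\QQ}$, while the argument needs an isomorphism of integral line bundles. Integrality of the coefficients identifies the classes rationally, but torsion in ${\rm Pic}$ could make $\mathcal{O}(F)$ differ from $\mathcal{O}(j)\otimes\varpi^*(\cdots)$ by a torsion line bundle. Geometrically, such a torsion twist would appear as a character. I would handle this either by working on the stack and noting that any torsion discrepancy only changes the form by a character of finite order, or by passing to a power of $F$. I would also check the claimed vanishing orders: since $s$ is nowhere zero off $F$, the order of vanishing of $\varpi_*s$ along $D_{ij}$ is exactly $r_{ij}$ generically.
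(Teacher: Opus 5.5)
Your argument is the paper's own: compute the class of $F$ from \eqref{tenrelations} and Proposition \ref{Wiclass} to obtain \eqref{Finbasis} and \eqref{rijcoeff}, then apply Observation \ref{opmerking} via the projection formula and $\varpi_*\mathcal{O}_{\PP(\EE)}(j)\cong{\rm Sym}^j(\EE)$. Your counts of the four partitions and of the $W_i$ with $i\notin\{a,b\}$ are both right.

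The paper passes over the torsion point you flag; it works in ${\rm Pic}_{\QQ}$ throughout. Neither of your remedies removes a possible finite-order character, since passing to a multiple of $F$ only bounds its order. A sketch of how the character could be excluded, which the paper does not give: a torsion line bundle on the compactification corresponds to a character of $\Gamma_2[2]$ that is trivial on the boundary monodromies, i.e.\ on squares of symplectic transvections, and by a classical result these generate $\Gamma_2[2]$.
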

\begin{example}
The divisor
$$
H_{(146)(235)}+H_{(136)(245)}+H_{(135)(246)}+H_{(145)(236)}+H_{(134)(256)}+H_{(156)(234)} 
+ W_1+W_2
$$
has class 
$$
2\, h +\varpi^*(4\, \lambda -\sum_{(ij)\neq (12)} D_{ij})
$$
and defines a modular form of weight $(2,4)$, namely ${\rm Sym}^2(G_1,G_2)\vartheta_1\vartheta_2\vartheta_3\vartheta_4\vartheta_5\vartheta_6$, 
cf.\ \cite[Example 16.2]{CvdGG}.
Similarly, the divisor
$$
2\, H_{(123)(456)}+ 2(W_1+W_2+W_3)=6\, h + \varpi^*(4\, \lambda -\delta_0- (D_{45}+D_{46}+D_{56}))
$$
defines a modular form of weight $(4,6)$. 
\end{example}

\end{section}

\begin{section}{Rings of covariants}
We write $V=\langle x_1,x_2\rangle$ for the vector space with basis $x_1,x_2$. The group ${\rm GL}(V)$
acts on $V^{\oplus 6}$, the space of six linear homogeneous forms $l_i=l_{i,1}x_1+l_{i,2}x_2$ ($i=1,\ldots,6$) 
in $x_1,x_2$. The ring of invariants $\mathcal{I}(V^{\oplus 6})$ consists of polynomials in the coordinates
$l_{i,1},l_{i,2}$ that are invariant under ${\rm SL}(V)\cong {\rm SL}_2$. By Gordan \cite{Gordan} this ring is generated
by the $15$ invariants 
$$
p_{ij}=l_{i,1}l_{j,2}-l_{i,2}l_{j,1}, \quad (1\leq i< j \leq 6),
$$ where the letter $p$ refers to Pl\"ucker. These satisfy the usual Pl\"ucker relations
$$
p_{ik}p_{jl}-p_{il}p_{jk}=p_{ij}p_{kl} \, .
$$
The ring $\mathcal{I}(V^{\oplus 6})=\oplus\,  \mathcal{I}_d(V^{\oplus 6})$ 
is graded by the degree $d$ in the coordinates $l_{i,1}$ and $l_{i,2}$.
We have (see \cite{Geyer})
$$
\dim \mathcal{I}_d(V^{\oplus 6})= (d+1)(d^2+2d+2)/2 \, .
$$
The ring of covariants $\mathcal{C}(V^{\oplus 6})$ consists of the polynomials
in the coordinates $\l_{i,1}$, $\l_{i,2}$ and $x_1,x_2$ that are invariant under ${\rm SL}_2$. 
It can be interpreted as the ring of invariants $\mathcal{I}(V^{\oplus 6} \oplus V^{\vee})$, 
see \cite[page 55]{Springer}. Gordan showed that it is generated by the forms
$l_i$ and the $p_{ij}$.

This ring is bigraded 
$\mathcal{C}(V^{\oplus 6})=\oplus\, \mathcal{C}_{\underline{d},b}(V^{\oplus 6})$,
where $\underline{d}=(d_1,\ldots,d_6)$ refers to the degree in the 
coefficients $l_{i,1},l_{i,2}$ and $b$ to the degree in $x_1,x_2$.

The dimension of $\mathcal{C}_{d,b}(V^{\oplus 6})$ for $d_i=d$ for $i=1,\ldots,6$ 
can also be found in \cite[p. 57]{Geyer} and it is given by
the coefficient of $z^{3d+b/2}$ in the Taylor expansion about $z=0$ of
\[
\Psi(z)=(1-z^{b+1})\left(\frac{1-z^{d+1}}{1-z}\right)^6\, ,
\]
so for $b$ odd we get $\dim \mathcal{C}_{d,b}(V^{\oplus 6})=0$.

Using the identification $\mathcal{C}(V^{\oplus 6})\cong \mathcal{I}(V^{\oplus 7})$,
and the fact that the $p_{ij}$ with $1 \leq i < j \leq 7$ generate this ring of
invariants of binary septics we find generators for $\mathcal{C}(V^{\oplus 6})$
by writing $l_7=l_{7,1}x_1+l_{7,2}x_2$ and substituting $l_{7,1}=-x_2$ and $l_{7,2}=x_1$
in invariants of binary septics.
For example, the generator of the $1$-dimensional space $\mathcal{C}_{1,6}(V^{\oplus 6})$, the
universal binary sextic, is given by
\[
C_{1,6}=p_{17}p_{27}p_{37}p_{47}p_{57}p_{67}
=l_{11} l_{21} l_{31} l_{41} l_{51} l_{61} x_1^{6}+\ldots+l_{12} l_{22} l_{32} l_{42} l_{52} l_{62} x_2^{6}\, .
\]

\bigskip

We will make use of a smaller ring
$\mathcal{C}^{\prime}(V^{\oplus 6})$  of covariants, the subring of 
$\mathcal{C}(V^{\oplus 6})$  where the covariants have
the same degree in the coefficients of the six linear forms $l_i$, see equation
\ref{samedegree} in the next section. 
The ring of invariants $\mathcal{I}'(V^{\oplus 6})$
is the coordinate ring (section ring) of the
 GIT quotient $({\PP}^1)^6//{\rm PGL}(2)$. This GIT quotient
 can be identified with the Segre cubic
given in ${\PP}^5$ with coordinates $y_1,\ldots,y_6$ by 
the equations
\begin{equation}\label{Segre}
\sigma_1=0, \, \sigma_3=0\, , 
\end{equation}
where $\sigma_i$ is the $i$th elementary symmetric function in the $y_1,\ldots,y_6$,
see \cite{D-O}.
The generating function of the dimension of $\mathcal{C}^{\prime}_{d,b}(V^{\oplus 6})$ 
can be computed from $\Psi(z)$; it is of the form $N/(1-t)^5$ with $N=N(s,t)$ a polynomial in $s^2$ of
degree $40$ in $s$ with as coefficient of $s^{2j}$ a polynomial in $t$ of degree $\leq j+2$.
\end{section}
\begin{section}{From modular forms to covariants and back}
The fact that the pull back of the Hodge bundle to the stack $[\mathcal{P}/{\rm GL}_2]$ under the isomorphism
 $[\mathcal{P}/{\rm GL}_2]\cong \mathcal{M}_2[2]$
is the equivariant bundle $V$
defines an injective homomorphism from modular forms to covariants
$$
\mu: \mathcal{R}(\mathcal{A}_2[2]) \to \mathcal{C}(V^{\oplus 6})
$$
that sends $M_{j,k}(\Gamma_2[2])$ to $\mathcal{C}_{j,2j-2k}(V^{\oplus 6})$. Here 
$$
M_{j,k}(\Gamma_2[2])=H^0(\widetilde{\mathcal{A}}_2[2], {\rm Sym}^j({\EE}) \otimes \det({\EE})^k)
$$ 
denotes the space of Siegel
modular forms of weight $(j,k)$ of level $2$ and $\mathcal{R}(\mathcal{A}_2[2])$ is the bigraded ring
$$
\mathcal{R}(\mathcal{A}_2[2])=\oplus_{j,k} M_{j,k}(\Gamma_2[2])\, .
$$
\begin{lemma} \label{degreeforscalar}
The image under $\mu$ of a modular form on $\mathcal{A}_2[2]$ lands in the subring $\mathcal{C}^{\prime}(V^{\oplus 6})$
 of   $\mathcal{C}(V^{\oplus 6})$ generated by covariants that have the same degree in the coefficients
of the six linear forms $l_i$.
This means that if we write $\ell_i=\ell_{i,1}x_1+\ell_{i,2}x_2$ then for a monomial
$\prod_i\ell_{i,1}^{\alpha_i}\ell_{i,2}^{\beta_i}$ occurring in a invariant of degree $d$ 
we have 
\begin{equation}\label{samedegree}
\alpha_i+\beta_i=d \quad \text{\rm for $i=1,\ldots,6$.}
\end{equation}
\end{lemma}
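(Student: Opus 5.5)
The plan is to read the multidegree condition off the stack structure of $[\mathcal{P}/{\rm GL}_2]$, and in particular off the fact that each point $p_i$ lives in ${\PP}^1$, not in $V$. The variety $\mathcal{P}$ involves the points $p_i\in{\PP}^1$, whereas the covariant is written in terms of linear forms $l_i$ with $l_i(p_i)=0$ (up to the obvious identification). So the first step is to lift $\mathcal{P}$ to
$$
\widehat{\mathcal{P}}=\{(l_1,\ldots,l_6)\in V^{\oplus 6}: \textstyle\prod l_i \text{ has non-vanishing discriminant}\}.
$$
The map $\widehat{\mathcal{P}}\to\mathcal{P}$ sends $(l_i)$ to $(p_i,f)$, where $p_i$ is the zero of $l_i$ and $f=\prod l_i$. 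This map is equivariant for ${\rm GL}_2$. It is a torsor under the torus
$$
T=\{(t_1,\ldots,t_6)\in\GG_m^6: \textstyle\prod t_i=1\},
$$
possibly after adjusting for the twist by $\det^{-2}$ in $V_{6,-2}$.

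Next we pull back a modular form $F$ of weight $(j,k)$. Restricting to $\mathcal{M}_2[2]$ and pulling back along $\widehat{\mathcal{P}}\to\mathcal{P}$ gives a regular function on $\widehat{\mathcal{P}}$ with values in ${\rm Sym}^j(V)$. It is ${\rm SL}_2$-equivariant and invariant under $T$, and it extends to all of $V^{\oplus 6}$ by the argument of \cite{CFG1}: the discriminant locus has codimension one, and a boundedness or holomorphicity argument near it shows that no poles can occur. This polynomial is $\mu(F)$.

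The multidegree condition then follows from $T$-invariance combined with equivariance for the scalars $t\cdot{\rm id}\in{\rm GL}_2$. Decompose $\mu(F)$ into multihomogeneous components of multidegree $(e_1,\ldots,e_6)$. Invariance under $T$ forces $t_1^{e_1}\cdots t_6^{e_6}$ to be a character trivial on $T$. Such a character is a power of $\prod t_i$, so all $e_i$ are equal, say $e_i=d$. This gives \eqref{samedegree} with $\alpha_i+\beta_i=d$.

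The scalar action pins down $d$ in terms of $(j,k)$. Under $t\cdot{\rm id}$ the form $F$ scales by $t^{j+2k}$, while a covariant of total degree $6d$ and order $j$ scales with the corresponding power. Comparing the two gives $6d$, or the appropriate normalization of it, in terms of $j+2k$, consistent with the target bigrading $\mathcal{C}_{j,2j-2k}$ used in the paper.

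The main obstacle is justifying the $T$-invariance. One has to check that the stack $[\mathcal{P}/{\rm GL}_2]$ really equals $[\widehat{\mathcal{P}}/({\rm GL}_2\times T)]$ with the correct action on $\mathbb{E}$. Concretely, rescaling each $l_i$ by $t_i$ with $\prod t_i=1$ must leave $f$, the points, and the identification of $\mathbb{E}$ with $V$ unchanged. The identification of $\mathbb{E}$ with $V$ depends only on $f$ (via the differentials $x\,dx/y$ and $dx/y$), so this holds.

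An alternative check is available if needed. Scalar multiplication $l_i\mapsto t\,l_i$ of a single form, compensated by $l_{i'}\mapsto t^{-1}l_{i'}$, preserves $f$. Hence $\mu(F)$, which depends only on $(p_i,f)$, is unchanged, and this again forces equal degrees in each pair.
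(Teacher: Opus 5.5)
Your argument is correct, but it is not the paper's route: it supplies a mechanism that the paper's proof leaves implicit.

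\textbf{The paper's argument.} The paper proves the lemma in one sentence, attributing the equal-degree condition to the fact that all six summands $V$ of $V^{\oplus 6}$ are identified with the fibre of $\EE$. Taken literally, that identification only says that ${\rm SL}_2$ acts on the six summands in the same way. Every element of $\mathcal{C}(V^{\oplus 6})$ already satisfies this, including $l_1l_2$ or $p_{12}$, which have unequal multidegrees. So by itself it does not rule such covariants out.

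\textbf{What your argument adds.} You isolate what actually forces \eqref{samedegree}:
\begin{itemize}
\item the pulled-back form depends only on the ordered roots $p_i$ and on $f=\prod l_i$;
\item hence it is invariant under the torus $T=\{\prod t_i=1\}$ acting on the fibres of $\widehat{\mathcal{P}}\to\mathcal{P}$;
\item the characters of $\GG_m^6$ that are trivial on $T$ are exactly the multiples of $(1,\ldots,1)$.
\end{itemize}
Equivalently, $f$ is a nowhere-vanishing section of $\mathcal{O}(-1,\ldots,-1)$ over the configuration space in $(\PP^1)^6$. Homogeneous functions on $\mathcal{P}$ are then sections of $\mathcal{O}(d,\ldots,d)$, which is how the paper phrases the restriction of $F$ in Section~2 (same degree in each pair $(\alpha_i:\beta_i)$).

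\textbf{Robustness of your argument.} It does not need polynomiality. A regular function on $\widehat{\mathcal{P}}$, i.e.\ a polynomial divided by a power of $\prod p_{ij}$, already has well-defined multidegrees, and $\prod p_{ij}$ is itself $T$-invariant. The points you leave vague are not needed for the multidegree statement:
\begin{itemize}
\item the $\det^{-2}$ twist;
\item the normalization of $d$;
\item the extension across $p_{ij}=0$.
\end{itemize}
${\rm SL}_2$-invariance, $T$-invariance and total homogeneity suffice, and the paper itself takes as given that $\mu$ lands in $\mathcal{C}(V^{\oplus 6})$.

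\textbf{Comparison.} The paper's version is shorter. Yours costs a little more setup but turns the lemma into a genuine one-line consequence of torus character theory.
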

\begin{proof}
The image of the map $\mu$ lands in the subring of covariants that have the same degree
in the coordinates of all the six $l_i$ since we identify all six summands $V$ of $V^{\oplus 6}$
with the fibre of the Hodge bundle ${\EE}$.   
\end{proof}

We know that the ring of modular forms of even weight is generated by the fourth powers of the even theta characteristics. This ring can be identified with the coordinate ring
(section ring) of the Igusa quartic, see \cite{CvdGG,vdG}.
We have for $\vartheta^4_i=\vartheta_{\pi}^4$ 
corresponding to the partition $\pi=(abc)(def)$ that its image is given by
\begin{equation}\label{sixpij}
\mu(\vartheta_{\pi}^4)=p_{ab}p_{ac}p_{bc}p_{de}p_{df}p_{ef}
\end{equation}
The form $\chi_5$ corresponds to the product of the $15$ invariants $p_{ij}$. The
full ring of modular forms is generated by $\chi_5$ over the subring of even weight.

\begin{remark}\label{outerauto}
The action of $\mathfrak{S}_6$ on the $p_{ij}$ as in \ref{sixpij} determines an action on the space
$M_2(\Gamma_2[2])$ as $s[2^3]$. The fact that $\mathfrak{S}_6$ admits an outer automorphism
reconciles this with the literature where one finds it as $s[3^2]$. See the paper \cite{HMSV}
for more on this outer automorphism. 
\end{remark}

For the modular form $G_i^4={\rm Sym}^4(G_i)$ of weight $(4,2)$ we know that its divisor on ${\PP}({\EE})$ 
over $\mathcal{A}_2[2]$ coincides with the closure of $4\, W_i$, hence we get by
Proposition \ref{Wiclass} for $i=1,\ldots,6$
$$
\mu(G_i^4)= \ell_i^4 \prod_{a<b, a\neq i \neq b} p_{ab}\, ,
$$
up to a multiplicative constant.

\bigskip
Under the map $\mu$ the modular form $\vartheta_i^4=\vartheta_{\pi}^4$ 
corresponds to the product of six $p_{ij}$ given in \ref{sixpij} and $G_i^4$ to $l_i^4 \prod_{a\neq i\neq b} p_{ab}$.

A covariant in $\mathcal{C}^{\prime}(V^{\oplus 6})$ of bidegree $(d,b)$ can be written as
\begin{equation}\label{Pi}
C_{d,b}= \sum_{j=0}^b P_j \, x_1^{b-j}x_2^j
\end{equation}
with $P_j$ a polynomial in the coefficients $l_{i,1}$, $l_{i,2}$ of the six linear forms that has the same degree $d$
for $i=1,\ldots,6$ in the sense as specified in equation \ref{samedegree}.

\begin{definition}
In order to define a map $\nu: \mathcal{C}'(V^{\oplus 6}) \to \mathcal{R}(\mathcal{A}_2[2])[1/\chi_5]$ 
in a convenient way we define $\nu$ by the substitution in $C_{d,b}$ given on the generators $l_i$ by
$$
l_i \mapsto G_i/(\prod_{a<b, a \neq i \neq b} p_{ab})^{1/4}
$$
Then the coefficients $P_j$ 
give the coefficients of the vector-valued modular form. 
\end{definition}
The right hand side is
just a formal expression, but we know that by applying this to a covariant 
from $\mathcal{C}^{\prime}(V^{\oplus 6})$ we will land in the ring of 
meromorphic modular forms of level~$2$.

\begin{proposition}
By substituting $G_i/(\prod_{a<b, a \neq i \neq b} p_{ab})^{1/4}$ for $\ell_i$ in a covariant we get an injective homomorphism
$$
\nu: \mathcal{C}^{\prime}(V^{\oplus 6}) \xrightarrow{} \mathcal{R}(\mathcal{A}_2[2])[1/\chi_5]
$$
that sends an element of $\mathcal{C}_{d,b}^{\prime}(V^{\oplus 6})$ to a meromorphic form of weight
$(b,d-b/2)$ in $\mathcal{R}(\mathcal{A}_2[2])[1/\chi_5]$.
The image is holomorphic on $\mathcal{M}_2[2]$.
\end{proposition}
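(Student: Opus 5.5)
The plan is to check, in turn, that $\nu$ is well defined (the fourth roots disappear), that it is a ring homomorphism, that it respects the weights, that it is injective, and that its image is holomorphic on $\mathcal{M}_2[2]$.

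\textbf{Well-definedness.} Here $p_{ab}$ in the substitution means $\tilde p_{ab}=G_a\wedge G_b$, the modular form from Section~\ref{theta}. Write $Q_i=\prod_{a<b,\,a\neq i\neq b}\tilde p_{ab}$, a product of ten Pl\"ucker forms. A covariant $C\in\mathcal{C}'(V^{\oplus 6})$ of bidegree $(d,b)$ is a polynomial in the generators $l_i$ and $p_{ij}$. By (\ref{samedegree}) every monomial is homogeneous of degree exactly $d$ in the coordinates of each $l_i$. So after substitution each monomial is multiplied by the same factor $\prod_i Q_i^{-d/4}$, and
\[
\nu(C)=C(G_1,\ldots,G_6)\cdot\prod_{i=1}^6 Q_i^{-d/4}.
\]
Each $\tilde p_{ab}$ occurs in exactly four of the $Q_i$ (those with $i\notin\{a,b\}$). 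Hence $\prod_i Q_i=\prod_{a<b}\tilde p_{ab}^{4}$, and
\[
\prod_i Q_i^{-d/4}=\prod_{a<b}\tilde p_{ab}^{-d}=(-2^{36}\chi_5^6)^{-d}
\]
by the identity at the end of Section~\ref{theta}. The fractional powers therefore cancel, and $\nu(C)$ is the honest expression $C(G_1,\ldots,G_6)/(-2^{36}\chi_5^6)^d$.

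\textbf{Homomorphism and weight.} Multiplicativity and additivity are clear, since the normalising factor depends only on $d$, which is additive. For the weight, the $G_i$ are sections of $\EE\otimes\det(\EE)^{1/2}$ on $\mathcal{A}_2[4,8]$. The substitution $l_i\mapsto G_i$ into an $\mathrm{SL}_2$-covariant of order $b$ and degree $6d$ in the $l$'s gives a form of weight $(b,\,6d/2+(6d-b)/2)=(b,\,6d-b/2)$. Here I use that a covariant $C_{d,b}$ transforms under $\mathrm{GL}_2$ with a determinant power $(6d-b)/2$. Dividing by $\chi_5^{6d}$ lowers $k$ by $5d$, giving weight $(b,\,d-b/2)$, as claimed.

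\textbf{Level and injectivity.} The main obstacle is showing that $\nu(C)$ lies on $\Gamma_2[2]$ rather than only on $\Gamma_2[4,8]$. The characters of the $G_i$ and of $\chi_5$ under $\Gamma_2[2]/\Gamma_2[4,8]$ must cancel. I would argue this by comparing with the geometric description. On $\mathcal{M}_2[2]\cong[\mathcal{P}/\mathrm{GL}_2]$ the Hodge bundle is $V$, and $G_i$ spans the line corresponding to the $i$-th Weierstrass point. So $G_i/Q_i^{1/4}$ is, up to a constant, the pullback of the tautological linear form $l_i$ normalised as in $\mu(G_i^4)=l_i^4\prod p_{ab}$. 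It follows that $\nu(C)$ is the section of $\mathrm{Sym}^b(\EE)\otimes\det(\EE)^{d-b/2}$ over $\mathcal{M}_2[2]$ defined by the covariant $C$ under the stack isomorphism $\varphi$. In other words, $\nu$ is inverse to the restriction map $\mu$ on $\mathcal{M}_2[2]$, and this section is a priori of level $2$ and regular there. This also gives holomorphicity on $\mathcal{M}_2[2]$: the only possible poles of $C(G)/\chi_5^{6d}$ lie on the zero locus of $\chi_5$, which is $\mathcal{A}_2[2]-\mathcal{M}_2[2]$. Injectivity follows as well, because a covariant vanishing identically on the Zariski dense set $\mathcal{P}$ is zero. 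Alternatively, one can argue that the $G_i(\tau)$ give, via the six tangent lines, a dominant family of six-tuples of points of $\mathbb{P}^1$, so no nonzero $\mathrm{SL}_2$-covariant vanishes on them. The delicate point is the consistency of the constants and eighth roots of unity relating $G_i$ to $l_i$. I would handle this by checking first on fourth powers via $\mu(G_i^4)$ and $\mu(\vartheta_\pi^4)$, where no root ambiguity arises, and then extending to all of $\mathcal{C}'$ by multiplicativity.
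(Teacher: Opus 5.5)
Your overall plan (cancel the fractional powers, read off the weight, then get level $2$, injectivity and regularity on $\mathcal{M}_2[2]$ from $[\mathcal{P}/{\rm GL}_2]\cong\mathcal{M}_2[2]$) is a reasonable self-contained substitute for the paper's proof, which only asserts level $2$ and quotes Proposition 4.2 of \cite{CvdG2023}. However, there is a genuine error in the normalization, and the weight step only seems to work because of an arithmetic slip.

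With your reading $p_{ab}=\tilde p_{ab}=G_a\wedge G_b$ you correctly get $\nu(C)=C(G_1,\ldots,G_6)/(-2^{36}\chi_5^6)^d$. But $\chi_5^{6d}$ has weight $30d$, not $5d$. So this expression has weight $(b,\,6d-b/2-30d)=(b,\,-24d-b/2)$, which contradicts the statement. Concretely, $\nu(C_{1,6})$ would be $\chi_{6,3}/\chi_5^{6}$, of weight $(6,-27)$, instead of $\chi_{6,-2}=c\,\chi_{6,3}/\chi_5$. More generally, for $F$ with $\mu(F)$ of degree $d$ you would get $\nu(\mu(F))=c\,F/\chi_5^{5d}$, e.g.\ $\vartheta_\pi^4/\chi_5^{10}$ or $G_i^4/\chi_5^{20}$. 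So $\nu\circ\mu\neq\mathrm{id}$, and your identification of $G_i/Q_i^{1/4}$ with the pullback of $l_i$ is false.

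In fact each single $Q_i=\prod_{a<b,\,i\notin\{a,b\}}\tilde p_{ab}$ is already a constant times $\chi_5^4$. Each $\vartheta_\pi$ divides exactly four of these $\tilde p_{ab}$: one pair from the block of $\pi$ containing $i$, three from the other block. So your substitution is just $l_i\mapsto G_i/(2^6\chi_5)$, too large by a factor $\chi_5^{5/6}$ per linear form. The check on fourth powers that you postpone to the end would have exposed this.

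What is needed is the normalization under which the six substituted forms multiply to $\chi_{6,-2}$ (up to a constant). Read the $p_{ab}$ in the substitution as the Pl\"ucker coordinates of the substituted forms themselves. If $\nu(l_i)=G_i/\lambda_i$, then $\nu(p_{ab})=\tilde p_{ab}/(\lambda_a\lambda_b)$. The defining condition $\lambda_i^4=\prod_{a<b,\,i\notin\{a,b\}}\nu(p_{ab})$ then becomes $(\lambda_1\cdots\lambda_6)^4=Q_i=c\,\chi_5^4$, consistently for every $i$.

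An element of $\mathcal{C}'$ has degree $d$ in every $l_j$, so only the product matters. Writing $\lambda_1\cdots\lambda_6=\kappa\,\chi_5$, you get $\nu(C)=\kappa^{-d}\,C(G_1,\ldots,G_6)/\chi_5^{d}$, of weight $(b,\,6d-b/2-5d)=(b,\,d-b/2)$. This agrees with the normalization $G_6/\chi_5^{1/6}$ the paper later uses for $\chi_{1,-1/3}$.

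With this correction your stack argument goes through. Forms $L_i\propto G_i$ with $L_1\cdots L_6=\chi_{6,-2}$ lift the universal factorization of the sextic over $\mathcal{M}_2[2]$. Elements of $\mathcal{C}'$ are insensitive to rescalings $l_i\mapsto c_il_i$ with $\prod c_i=1$, so $\nu(C)$ is exactly the section attached to $C$ via $\varphi$. Level $2$ and $\nu\circ\mu=\mathrm{id}$ then follow. Poles can occur only on the zero locus $\mathcal{A}_{1,1}[2]$ of $\chi_5$. Injectivity follows by density, as you say.
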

\begin{proof} We know that the image lands in level $2$.
Then the statement follows immediately from Proposition 4.2 of \cite{CvdG2023}.
\end{proof}

We thus get an injective homomorphisms
$$
\mathcal{R}(\mathcal{A}_2[2]) \langepijl{\mu} \mathcal{C}^{\prime}(V^{\oplus 6})\langepijl{\nu} \mathcal{R}(\mathcal{M}_2[2])
$$
with $\nu\circ \mu$ the identity on $\mathcal{R}(\mathcal{A}_2[2]) \subset \mathcal{R}(\mathcal{M}_2[2])$.

\end{section}
\begin{section}{A criterion for regularity}
Since the homomorphism $\nu:\mathcal{C}^{\prime}(V^{\oplus 6})\to \mathcal{R}(\mathcal{A}_2[2])[1/\chi_5]$
produces in general modular forms that are holomorphic on $\mathcal{M}_2[2]$, but can have poles along 
irreducible components of $\mathcal{A}_{1,1}[2]$ we need a criterion to see when for a covariant $C$ the 
expression $\nu(C)$
is holomorphic. One way to check the holomorphicity is by using the Fourier series to develop the
Taylor expansion along the irreducible components $H_{\pi}$ on  $\mathcal{A}_{1,1}[2]$. A more direct way is by a criterion on the
covariant.

Recall the way to write a covariant $C_{d,b}= \sum_{j=0}^b P_j \, x_1^{b-j}x_2^j$ in $\mathcal{C}^{\prime}(V^{\oplus 6})$ as in \ref{Pi}.
For a partition $\pi=\{a,b,c\}\cup \{d,e,f\}$ of $\{1,\ldots,6\}$ we define a valuation on $\mathcal{C}^{\prime}(V^{\oplus 6})$
as follows.

\begin{definition}\label{defv}
Let $\varphi_{\pi}$ and $\varphi'_{\pi}$ be the substitutions in the 
coefficients $P_j$ of a covariant $C_{d,b}$ as in equation \ref{Pi}
given by
$$
\varphi_{\pi}: (l_{i,1},l_{i,2}) \mapsto (l_{i,1}+t,1) \qquad \text{for $i \in \{a,b,c\}$}
$$
and
$$
\varphi_{\pi}^{\prime}: (l_{i,1},l_{i,2}) \mapsto (1,l_{i,2}+t) \qquad \text{for $i \in \{a,b,c\}$}
$$
We then define 
$$
v_{\pi}(P_j)= 2d-\min \{\deg_t(\varphi_{\pi}(P_j)), \deg_t(\varphi_{\pi}^{\prime}(P_j))\}\, ,
$$
where $\deg_t$ indicates the degree in $t$.
\end{definition}

\begin{example} \label{exampleC16}
Let $C_{1,6}=l_1l_2l_3l_4l_5l_6=\sum_{i=0}^6a_ix_1^{6-i}x_2^i$ be the universal binary sextic. Then we have 
$$
[v_{\pi}(a_0),\ldots,\nu_{\pi}(a_6)]=[2,2,2,2,2,2,2]-[0,1,2,3,2,1,0]=[2,1,0,-1,0,1,2] \, .
$$
\end{example}
By comparing with \cite{CFG1,CFG2} this example shows that the valuation $v_{\pi}$ 
defined in \ref{defv} coincides with the valuation defined by the order
along $\mathcal{A}_{1,1}$ for covariants of binary sextics, that is, 
on level $1$ instead of level $2$. Indeed, that valuation is completely defined by
the values $v_{\pi}(a_i)$ for $i=0,\ldots,6$.

Using formula \ref{sixpij} we find
$$
v_{\pi}(\vartheta_{\pi'}^4)=\begin{cases}4 & \pi=\pi' \\
0 & \pi\neq \pi'\, . \\  
\end{cases}
$$
Indeed, the degrees of all $\vartheta_{\pi}^4$ are equal in view of the symmetry and
the product corresponds to the square of the discriminant with degree $20$. 
So $d=2$ and the expression 
$\min\{\deg_t (\varphi_{\pi}), \deg_t(\varphi^{\prime})\}$
equals $0$ or $4$ depending on the distribution of the terms with $t$ over the
partition.

The irreducible divisor $H_{\pi}$ on $\mathcal{A}_2[2]$ also defines a valuation.
\begin{proposition}
We have $v_{\pi}={\rm ord}_{H_{\pi}}$.
\end{proposition}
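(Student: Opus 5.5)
We need to show $v_\pi(\nu(C))$ equals the order of vanishing of the meromorphic form $\nu(C)$ along $H_\pi$, for $C\in\mathcal{C}'(V^{\oplus 6})$ (where ${\rm ord}_{H_\pi}$ of a vector-valued form is the minimum over its coefficients in a local frame). The plan is to reduce to level one by symmetry and then compare with the known level-one statement of \cite{CFG1,CFG2}.

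\textbf{Step 1: both sides are valuations.} Both $v_\pi$ and ${\rm ord}_{H_\pi}$ are (discrete) valuations on the appropriate fraction field, extended to vector-valued objects by taking the minimum over coefficients.
\begin{itemize}
\item For ${\rm ord}_{H_\pi}$ this is clear since $H_\pi$ is an irreducible divisor.
\item For $v_\pi$, observe that $\varphi_\pi$ (resp.\ $\varphi'_\pi$) sends the three roots $l_a,l_b,l_c$ towards one common point of $\mathbb{P}^1$, namely $\infty$ (resp.\ $0$), as $t\to\infty$. Then $2d-\deg_t$ measures the order in $1/t$ after a normalization of weight $2d$. This is the standard degeneration corresponding to a boundary point of $\mathcal{A}_{1,1}$: three Weierstrass points collide, or equivalently the other three do after ${\rm PGL}_2$. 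Taking the minimum over $\varphi_\pi$ and $\varphi'_\pi$ makes it independent of which triple is moved.
\end{itemize}

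\textbf{Step 2: agreement on generators.} It then suffices to check agreement on a generating set of the fraction field of $\mathcal{C}'(V^{\oplus 6})$ over which $\nu$ is defined. Convenient generators are:
\begin{itemize}
\item the fourth powers $\vartheta_{\pi'}^4$, whose images are products of six $p_{ij}$;
\item the forms $G_i^4$, with $\mu(G_i^4)=\ell_i^4\prod p_{ab}$;
\item $\chi_5$, the product of all $p_{ij}$.
\end{itemize}
Together these give the $p_{ij}$ and $\ell_i^4$ up to roots. For the $\vartheta_{\pi'}^4$, the text preceding the proposition already gives $v_\pi(\vartheta_{\pi'}^4)=4\delta_{\pi\pi'}$. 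On the geometric side, equation (\ref{tenrelations}) says $\vartheta_{\pi'}^4$ vanishes to order $4$ along $H_{\pi'}$ (its divisor is $4H_{\pi'}$ plus boundary components $D_{ij}$ only), so ${\rm ord}_{H_\pi}(\vartheta_{\pi'}^4)=4\delta_{\pi\pi'}$.

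For $\ell_i$, we need ${\rm ord}_{H_\pi}(G_i)$ matched against $v_\pi(\ell_i)$. By Proposition \ref{Wiclass} the divisor of $G_i^4$ involves only $W_i$ and $\Delta'_i$, with no $H_\pi$ component, so $G_i^4$ has order $0$ along $H_\pi$. On the covariant side, substituting $\varphi_\pi$ in $\ell_i^4\prod p_{ab}$ gives degree exactly $2\cdot(\text{degree})$, hence $v_\pi=0$. Since the Plücker values have already been matched, the $\ell_i$ also match.

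\textbf{Step 3: the main obstacle.} The hard part is that $v_\pi$ is defined by degrees of coefficients, so it is a priori only a lower bound under cancellation. To get a valuation with multiplicativity, I would identify $v_\pi$ with $-\deg$ in $t$ at the point $(l_i)=(\text{generic}+t\,\text{shift})$. This is a Gauss-type valuation attached to the generic point of the exceptional locus in a blow-up of the GIT quotient (the Segre cubic) at the node corresponding to $\pi$. Its multiplicativity is standard, since the degree of a product of polynomials in $t$ with generic coefficients is additive.

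Finally, one checks that this node maps to $H_\pi$ under the identification of the Segre cubic with the Igusa quartic, which holds by Example \ref{exampleC16} and the comparison with level one. This identifies the two valuations on the field generated by the $\nu$-images.
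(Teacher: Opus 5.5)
\paragraph{The main gap: agreement on generators.}
Your Step~2 relies on the principle that two valuations which agree on a set of generators of a field agree everywhere. That principle is false. On $k(x,y)$, take the order at the origin and $f\mapsto {\rm ord}_s f(s,s+s^2)$: both give $1$ on $x$ and on $y$, but they give $1$ and $2$ respectively on $y-x$. Cancellation in sums is exactly what such a check cannot see. So matching the values on $\vartheta_{\pi'}^4$, $G_i^4$ and $\chi_5$ proves nothing about $\nu(C)$ for a general $C$.

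The missing idea is the paper's reduction to level one:
\begin{itemize}
\item From the local equation $\vartheta_\pi$ of $H_\pi$ and the theta quotients, the paper extracts only the inequality $v_\pi\geq {\rm ord}_{H_\pi}$.
\item It applies this inequality to every conjugate $\sigma(F)$.
\item For the $\mathfrak{S}_6$-invariant product $F'=\prod_\sigma\sigma(F)$ the two sides coincide, because at level one $v_\pi$ is the valuation along $\mathcal{A}_{1,1}$ of \cite{CFG1,CFG2}.
\item Equality of the sums, together with the termwise inequality, forces $v_\pi(F)={\rm ord}_{H_\pi}(F)$.
\end{itemize}
Equivalently, both are extensions of the same level-one valuation, hence $\mathfrak{S}_6$-conjugate, and the values $4\delta_{\pi\pi'}$ on the $\vartheta_{\pi'}^4$ single out $H_\pi$.

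\paragraph{Step 3 does not replace this.}
The Segre cubic is not identified with the Igusa quartic. They are related by the birational map given by the quadrics $\mu(\vartheta_{\pi'}^4)$ through the ten nodes, which sends the exceptional divisor over the node $\pi$ onto the quadric surface $H_\pi$. To make your route work you would have to prove two things, and together they are the whole content of the proposition:
\begin{itemize}
\item that $2d-\deg_t\varphi_\pi$ is the divisorial valuation of that exceptional divisor with the correct multiplicity (along your arc the collision cross-ratio is of order $t^{-2}$, a factor $2$ that must be matched with the ramification of $\mathfrak{H}_2\to\mathcal{A}_2[2]$ along $H_\pi$);
\item that this divisor maps onto $H_\pi$.
\end{itemize}
Example~\ref{exampleC16} gives neither.

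\paragraph{Concrete errors in Steps 1--2.}
First, $\varphi_\pi$ and $\varphi'_\pi$ move the same triple $\{a,b,c\}$, towards $(0{:}1)$ and $(1{:}0)$ respectively. So taking the minimum does not make $v_\pi$ symmetric in the two triples.

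Second, on scalar invariants the two substitutions give the same $t$-degree (apply $x_1\leftrightarrow x_2$), so there $v_\pi=2d-\deg_t\varphi_\pi$ is a genuine valuation. Extended to covariants by the minimum over coefficients, it is not. In particular your check on $G_i^4$ fails for $i\notin\{a,b,c\}$. Take $\mu(G_d^4)=\ell_d^4\prod_{x<y,\;x,y\neq d}p_{xy}$:
\begin{itemize}
\item the six factors $p_{xy}$ with $x\in\{a,b,c\}$, $y\in\{e,f\}$ have $t$-degree $1$;
\item all other factors have $t$-degree $0$.
\end{itemize}
So every coefficient has $t$-degree $6$ under both substitutions, and $v_\pi(\mu(G_d^4))=8-6=2$, whereas $G_d^4$ has order $0$ along $H_\pi$.

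Likewise, $\prod_i\mu(G_i^4)$ is a constant times $(I_5C_{1,6})^4$, which has $v_\pi=0$, while $\sum_i v_\pi(\mu(G_i^4))=6$. The comparison therefore has to be made for scalar functions, as in the paper's proof, not for vector-valued forms via a Gauss-type minimum.
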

\begin{proof}
Using the values $v_{\pi}(\vartheta_{\pi'})$,
the fact that on the Siegel upper half space  $\mathfrak{H}_2$ (or an \'etale cover of
$\mathcal{A}_2[2]$) a local equation of $H_{\pi}$ is given by $\vartheta_{\pi}$ and
the fact that the function field of $\mathcal{A}_2[2]$ is generated by powers of
$\vartheta_{\pi'}/\vartheta_{\pi^{\prime\prime}}$, we see that
$v_{\pi}\geq {\rm ord}_{H_{\pi}}$. Indeed, we can write
a function $F$ locally as $\vartheta^c_{\pi}\, f$ with $f$ expressed in the $\vartheta_{\pi'}$ with $\pi'\neq \pi$.
But if we replace $F$ by the product $F'$ of $\sigma(F)$
for all $\sigma \in \mathfrak{S}_6$ we get
$$
v_{\pi}(F')=\sum_{\sigma} v_{\pi}(\sigma(F)) \geq 
\sum_{\sigma} {\rm ord}_{H_{\pi}}(\sigma(F))=
{\rm ord}_{H_{\pi}}(F')=v_{\pi}(F')\, ,
$$
where the equality $v_{\pi}(F')={\rm ord}_{H_{\pi}}(F')$ follows because these valuations agree in level $1$.
Thus we see that $v_{\pi}(F)={\rm ord}_{H_{\pi}}(F)$.

\end{proof}
We thus get as a corollary the following criterion.
\begin{criterion} Let $C\in \mathcal{C}^{\prime}(V^{\oplus 6})$ be a covariant.
Then the meromorphic modular form $\nu(C)$ is holomorphic
if and only if $v_{\pi}(C) \geq 0$ for all partitions $\pi$.
\end{criterion}
\end{section}
\begin{section}{Constructing vector-valued modular forms from covariants}
As shown above we can construct the space $M_{j,k}(\Gamma_2[2])$ as the
image under $\nu$ of the subspace of $\mathcal{C}_{j,2(j-k)}^{\prime}(V^{\oplus 6})$
of covariants $C$ that satisfy the holomorphicity criterion $v_{\pi}(C)\geq 0$
for all ten $\pi$. For the dimensions of the spaces of modular forms we refer to \cite{BC}.

The spaces $\mathcal{C}_{d,b}^{\prime}(V^{\oplus 6})$ tend to have very large
dimension, so in practice we start with covariants of bidegree $(j,b)$ with smaller
$b$ and then get rid of the poles by multiplying with a power of $\chi_5$.
We note that $\chi_5= \nu(I_5)$ with $I_5=\prod p_{ij}$, the square root of the
discriminant. Indeed, we have $v_{\pi}(I_5)=1$ for all $\pi$.

The ring $\mathcal{I}(V^{\oplus 6})$ is well-known as we saw above. As to
the action of $\mathfrak{S}_6$ we have
\[
\mathcal{I}_d(V^{\oplus 6})\cong \Sym^d(s[3^2])-\
\begin{cases}
0 \quad \hspace{3.1cm}\text{if} \quad 0\leqslant d\leqslant 2\\
\Sym^{d-3}(s[3^2])\otimes s[1^6] \quad \text{if} \quad d\geqslant 3\\
\end{cases}
\]
with the convention $\Sym^0(s[3^2])=s[6]$.
For the ring of covariants $\mathcal{C}^{\prime}(V^{\oplus 6})$ we do not
have a closed formula for the decomposition of $\mathcal{C}_{d,b}^{\prime}(V^{\oplus 6})$
as an $\mathfrak{S}_6$-representation. We construct covariants, apply the action of
$\mathfrak{S}_6$ and try to generate a basis.
Let us begin with a simple example. 
\begin{example}
The space $\mathcal{C}^{\prime}_{1,2}(V^{\oplus 6})$
is $s[4,2]$ as $\mathfrak{S}_6$-representation and is generated by 
\[
C_0= p_{36}p_{45}\left(l_{1,1} l_{2,1}\,x_1^2+ \left(l_{1,1} l_{2,2}+l_{1,2} l_{2,1}\right)\,x_1x_2+l_{1,2} l_{2,2} \,x_2^2\right).
\]
One checks that $\nu(I_5C_0)\in S_{2,5}(\Gamma_2[2])$ and generates it as an 
$\mathfrak{S}_6$-representation $s[2^2,1^2]$
with $\dim S_{2,5}(\Gamma_2[2])=9$. In \cite[Section 15 and 20]{CvdGG}, generators of the 
space $S_{2,5}(\Gamma[2])$ have been constructed by using so-called Rankin-Cohen brackets:
$\Phi_i=[\vartheta_i^4,\chi_5]/\chi_5$ with the relation $\sum_{i=1}^{10}\Phi_i=0$.
We  can express one set of generators in the other one.
\end{example}

\bigskip
As an illustration we now use covariants of bidegree $(d,b)$  with $b=4$ and $1 \leq d \leq 2$
to construct vector-valued modular forms. For simplicity we just write $\mathcal{C}^{\prime}_{d,b}$
for $\mathcal{C}^{\prime}_{d,b}(V^{\oplus 6})$.

{\bf Case $(d,b)=(1,4)$.} 
We have $\mathcal{C}^{\prime}_{1,4}=s[5,1]$ as $\mathfrak{S}_6$-representation. It can be generated by the
covariant
$$
C= p_{12}l_3l_4l_5l_6
$$
and one can check that $v_{\pi}(C)\geq -1$ for all $\pi$. This yields that $\nu(I_5C) \in M_{4,4}(\Gamma_2[2])$,
but as we know
$$
M_{4,4}(\Gamma_2[2])=s[4,2]\oplus s[3,2,1]\oplus s[2^3]\oplus s[2,1^4], \quad S_{4,4}(\Gamma_2[2])=s[2,1^4]
$$
it follows that $\nu(I_5C)$ generates $S_{4,4}(\Gamma_2[2])$.
We can identify this form up to a multiplicative constant with 
$\vartheta_7\vartheta_8\vartheta_9\vartheta_{10} {\rm Sym}^4(G_3,G_4,G_5,G_6)$.

\bigskip
{\bf Case $(d,b)=(2,4)$.} Here $\dim \mathcal{C}^{\prime}_{2,4}=40$ and we show that
$$
\mathcal{C}^{\prime}_{2,4}(V^{\oplus 6})= s[6]\oplus s[5,1]\oplus 2\, s[4,2] \oplus s[3,2,1] \, .
$$
One piece of this can be generated by taking $\mathcal{I}_1\otimes \mathcal{C}^{\prime}_{1,4}$
which generates $s[3^2]\otimes s[5,1]=s[4,2]\oplus s[3,2,1]$. The isotypic
component $s[3,2,1]$ is generated by $H=p_{12}p_{45}p_{46}p_{56}l_1l_2l_3^2$ and
one verifies $v_{\pi}(H)\geq -1$ for all $\pi$. We conclude that $\nu(I_5H)$ generates the $s[3,2,1]$
component of 
$$
S_{4,5}(\Gamma_2[2])=s[3,2,1]\oplus s[2^2,1^2]\oplus s[2,1^4] \, .
$$
To generate the remaining isotypic components we consider the $36$ covariants 
$$
C_{ij}=l_i l_j \left(\frac{l_1l_2l_3l_4l_5l_6}{l_i},\frac{l_1l_2l_3l_4l_5l_6}{l_j}\right)_4
\qquad 1 \leq i,j\leq 6\, ,
$$
where the lower index $4$ indicates the $4$th transvectant  (\"Uberschiebung). 
We refer to \cite[Section 2]{CFG2}
for more on the theory of covariants.
These covariants generate a $15$-dimensional space $s[6]\oplus s[5,1]\oplus s[4,2]$.
The $s[6]$-part of $\mathcal{C}^{\prime}_{2,4}$ is generated by the covariant
$$
75 (f_6,f_6)_4= (10a_0a_4-5a_1a_3+2a_2^2)x_1^4+ \cdots
$$
with $f_6=a_0x_1^6+\ldots + a_6x_2^6$ the universal binary sextic.
Thus we have all of $\mathcal{C}^{\prime}_{2,4}$. We now show that we can generate the whole
space $S_{4,5}(\Gamma_2[2])$ with these covariants. 
The projection on the isotypic component $s[5,1]\subset \mathcal{C}^{\prime}_{2,4}$
of any $C_{ij}$ has valuation $v_{\pi}\geq -1$ and we thus can generate
the $s[2,1^4]$-part of $S_{4,5}(\Gamma_2[2])$ by $\nu(I_5[s[5,1])$. 
To obtain the remaining part $s[2^2,1^2]$ of $S_{4,5}(\Gamma_2[2])$ we let $W$
be the orbit under $\mathfrak{S}_6$ of the projection of the covariant $C_{12}$ 
on the isotypic component $2s[4,2] \subset \mathcal{C}^{\prime}_{2,4}$.
One checks that any element of $W$ has valuation $v_{\pi}\geq -1$ for all $\pi$.
Thus we can generate the $s[2^2,1^2]$ component of $S_{4,5}(\Gamma_2[2])$ as
$\nu(I_5 \, W)$.

\bigskip
We further explain and illustrate the method by working with covariants of bidegree $(d,6)$
with $1 \leq d \leq 6$.

\noindent
{\bf Case $d=1$.} The space 
$\mathcal{C}^{\prime}_{1,6}$ is generated by the obviously $\mathfrak{S}_6$-invariant 
covariant $C_{1,6}=\ell_1\cdots \ell_6$,
and $v_{\pi}(C_{1,6})=[2,1,0,-1,0,1,2]$ for all $\pi$, see Example \ref{exampleC16}. 
So $\nu(I_5C_{1,6})$ is a holomorphic modular form  
of weight $(6,3)$ on $\Gamma_2[2]$ and in fact
$$
\chi_{6,3}= {\rm Sym}^6(G_1,\ldots,G_6) \, .
$$
{\bf Case $d=2$.} 
We have $15$ invariants of degree $1$, namely $p_{ab}p_{cd}p_{ef}$, where
$\{a,b,c,d,e,f\}=\{1,2,3,4,5,6\}$, but by the Pl\"ucker relations only five
are linearly independent. We choose as basis of $\mathcal{I}_1(V^{\oplus 6})$:
\[
i_1=p_{12}p_{34}p_{56}, 
i_2=p_{12}p_{35}p_{46}, 
i_3=p_{13}p_{24}p_{56}, 
i_4=p_{13}p_{25}p_{46}, 
i_5=p_{14}p_{25}p_{36}
\]
The space of covariants of bidegree $(2,6)$ decomposes as
\[
\mathcal{C}'_{2,6}(V^{\oplus 6})=s[5,1]\oplus s[4,2]\oplus s[4,1^2] \oplus s[3^2].
\]
Indeed, we can write a basis of $\mathcal{C}_{2,6}^{\prime}(V^{\oplus 6})$  by starting with
$C_{j} = C_{1,6} i_{j}$ for $j=1,\ldots,5$ 
and continuing with
\begin{align*}
&C_{6} = l_{1}^{2} l_{2}^{2} l_{3}^{2} p_{45} p_{46} p_{56}, 
C_{7} = l_{1}^{2} l_{2}^{2} l_{3} l_{4} p_{36} p_{45} p_{56}, 
C_{8} = l_{1}^{2} l_{2}^{2} l_{3} l_{5} p_{36} p_{45} p_{46}, 
C_{9} = l_{1}^{2} l_{2}^{2} l_{4} l_{5} p_{36}^{2} p_{45}, \\
&C_{10} = l_{1}^{2} l_{2}^{2} l_{3} l_{4} p_{35} p_{46} p_{56}, 
C_{11} = l_{1}^{2} l_{2}^{2} l_{4}^{2} p_{35} p_{36} p_{56}, 
C_{12} = l_{1}^{2} l_{3} l_{4}^{2} l_{5} p_{26}^{2} p_{35}, 
C_{13} = l_{1}^{2} l_{2} l_{4}^{2} l_{6} p_{25} p_{35} p_{36}, \\
&C_{14} = l_{1}^{2} l_{3} l_{4}^{2} l_{5} p_{25} p_{26} p_{36}, 
C_{15} = l_{1}^{2} l_{2} l_{3} l_{5}^{2} p_{24} p_{36} p_{46}, 
C_{16} = l_{1}^{2} l_{2} l_{3} l_{5} l_{6} p_{24} p_{36} p_{45}, 
C_{17} = l_{1}^{2} l_{2} l_{5} l_{6}^{2} p_{24} p_{34} p_{35}, \\
&C_{18} = l_{1}^{2} l_{2} l_{3} l_{5} l_{6} p_{23} p_{45} p_{46}, 
C_{19} = l_{1}^{2} l_{2} l_{3} l_{6}^{2} p_{23} p_{45}^{2}, 
C_{20} = l_{1}^{2} l_{3} l_{4} l_{5}^{2} p_{23} p_{26} p_{46}, 
C_{21} = l_{1} l_{2}^{2} l_{4}^{2} l_{6} p_{16} p_{35}^{2}, \\
&C_{22} = l_{1} l_{2} l_{3}^{2} l_{4} l_{6} p_{16} p_{25} p_{45}, 
C_{23} = l_{1} l_{2} l_{3}^{2} l_{5} l_{6} p_{16} p_{24} p_{45}, 
C_{24} = l_{2} l_{3}^{2} l_{4}^{2} l_{5} p_{16}^{2} p_{25}, 
C_{25} = l_{2} l_{3}^{2} l_{4} l_{5}^{2} p_{16}^{2} p_{24}, \\
&C_{26} = l_{2} l_{3} l_{4}^{2} l_{5}^{2} p_{16}^{2} p_{23}, 
C_{27} = l_{1} l_{2}^{2} l_{3}^{2} l_{4} p_{15} p_{46} p_{56}, 
C_{28} = l_{3} l_{4}^{2} l_{5}^{2} l_{6} p_{12}^{2} p_{36}, 
C_{29} = l_{3} l_{4} l_{5}^{2} l_{6}^{2} p_{12} p_{14} p_{23}
\end{align*}

The $s[3^2]$-component 
 is generated by the $\mathfrak{S}_6$-orbit of $C_1$.
The $s[4,1^2]$-component is generated by the orbit of $C_6$ or $C_{11}$.
The $s[5,1]$-component can be generated by
$$
4C_1-4C_2-C_3+C_4+6C_5-7C_7+4C_8+C_9+C_{10}+3C_{11}+C_{13}-C_{15}+C_{21}-C_{22}-C_{27}+5C_{28}
$$
and a generator of the $9$-dimensional piece $s[4,2]$ is 
$$
2C_1-2C_2+C_7-3C_8+C_9-4C_{16}\, .
$$
Since each element of $\mathcal{C}'_{2,6}(V^{\oplus 6})$ produces a 
meromorphic modular form of weight $(6,-1)$ on $\Gamma[2]$ we are 
looking for elements $C=\sum_{j=0}^6 P_jx_1^{6-j}x_2^j \in 
\mathcal{C}'_{2,6}(V^{\oplus 6})$ such that
\[
v_{\pi}(P_j)\geqslant -1 \quad \text{ for any $0\leqslant j \leqslant 6$
and any partition $\pi$. }
\]
Then multiplying by $\chi_5\in S_{0,5}(\Gamma_2[2])\cong s[1^6]$ 
we find modular forms in $M_{6,4}(\Gamma_2[2])$.
We know the isotypical decomposition of the spaces 
$M_{6,4}(\Gamma_2[2])$ and $S_{6,4}(\Gamma_2[2])$; they are given by

\begin{align*}
M_{6,4}(\Gamma_2[2]) &\cong s[4, 2] \oplus  s[3, 2, 1] \oplus  2\,s[3, 1^3] \oplus  s[2^3] \oplus  s[2^2,1^2] \oplus  s[2,1^4];\\
S_{6,4}(\Gamma_2[2]) &\cong s[3, 1^3] \oplus  s[2^2,1^2]\, . 
\end{align*}
We check that no element in the $s[3^2]$ and $s[5,1]$-isotypic components satisfies the
condition on the valuations $v_{\pi}$. The elements in the $s[4,2]$ and $s[4,1^2]$-isotypic components do. Since we multiply with $\chi_5$ we thus find elements
$$
F \in \nu(s[4,2] I_5) \subset s[2^2,1^2] \subset M_{6,4}(\Gamma_2[2]), \quad
G \in  \nu(s[4,1^2] I_5)\subset s[3,1^3] \subset M_{6,4}(\Gamma_2[2])\, .
$$
\begin{lemma}
We have $F\in S_{6,4}(\Gamma_2[2])$ and $G \in S_{6,4}(\Gamma_2[2])$.
\end{lemma}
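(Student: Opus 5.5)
The plan is to use the known $\mathfrak{S}_6$-isotypic decompositions
\[
M_{6,4}(\Gamma_2[2]) \cong s[4, 2] \oplus s[3, 2, 1] \oplus 2\,s[3, 1^3] \oplus s[2^3] \oplus s[2^2,1^2] \oplus s[2,1^4], \qquad
S_{6,4}(\Gamma_2[2]) \cong s[3, 1^3] \oplus s[2^2,1^2],
\]
together with the fact that the Siegel $\Phi$-operator is $\mathfrak{S}_6$-equivariant. The cokernel of $S_{6,4}\hookrightarrow M_{6,4}$ is isomorphic, as an $\mathfrak{S}_6$-representation, to the image of $M_{6,4}(\Gamma_2[2])$ under the $\Phi$-operators to the fifteen one-dimensional cusps. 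Comparing the two decompositions, this image is
\[
s[4,2]\oplus s[3,2,1]\oplus s[3,1^3]\oplus s[2^3]\oplus s[2,1^4].
\]
In particular, the $s[2^2,1^2]$-isotypic component of $M_{6,4}(\Gamma_2[2])$ contains no non-cuspidal part. Since $s[2^2,1^2]$ occurs with multiplicity one in $M_{6,4}$ and that copy lies in $S_{6,4}$, every form in the $s[2^2,1^2]$-isotypic component is a cusp form. This gives $F\in S_{6,4}(\Gamma_2[2])$ immediately.

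For $G$ the multiplicity count alone does not suffice, because $s[3,1^3]$ occurs twice in $M_{6,4}$ and only once in $S_{6,4}$. I would therefore check directly that $\Phi(G)=0$ at every cusp. Since $G$ generates an irreducible $\mathfrak{S}_6$-representation and $\mathfrak{S}_6$ permutes the fifteen boundary components $D_{ij}$ transitively, it is enough to show that every element of the $\mathfrak{S}_6$-orbit of $G$ vanishes on one boundary component, say $D_{12}$. Concretely, I would take a generator $G=\nu(I_5C)$, with $C$ in the orbit of $C_6=l_1^2l_2^2l_3^2p_{45}p_{46}p_{56}$ (or of $C_{11}$), projected to $s[4,1^2]$. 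I would then substitute the Fourier expansions of the $G_i$ and $\vartheta_i$ from Section \ref{theta} and check that the terms with $Q_2^0$ (the $\Phi$-limit $\tau_2\to i\infty$) vanish for all seven coefficients, for enough translates $\sigma(C)$ to span the representation.

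A more conceptual alternative is to extend the geometric divisor picture of Observation \ref{opmerking}. I would compute the vanishing order of $\nu(C)$ along $D_{ij}$: the substitution $l_i\mapsto G_i/(\prod p_{ab})^{1/4}$ together with Proposition \ref{Wiclass} assigns to each monomial in the $l$'s and $p$'s a $D_{ij}$-order, namely the coefficient $r_{ij}$ from \eqref{rijcoeff} adjusted by the $p_{ab}$ factors. One then checks that after multiplying by $I_5$, which vanishes simply on each $D_{ij}$, the order is positive for every monomial. For $C_6$, the factor $p_{45}p_{46}p_{56}$ combined with $I_5$ already suggests vanishing on $D_{45},D_{46},D_{56}$, and symmetrization should spread this to all boundary components.

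The main obstacle is this $G$ case. One must exclude the non-cuspidal copy of $s[3,1^3]$, and the cleanest way to do so is the explicit Fourier-expansion check of $\Phi$. I would carry out that check at the cusp $Q_2\to 0$ for a basis of the orbit of the chosen generator.
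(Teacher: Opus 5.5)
Your argument for $F$ is the paper's argument: $s[2^2,1^2]$ has multiplicity one in both $M_{6,4}$ and $S_{6,4}$.

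For $G$ you take a genuinely different route. The paper never looks at the boundary. It splits $M_{6,4}=S_{6,4}\oplus{\rm KE}_{6,4}$ and identifies the Eisenstein copy of $s[3,1^3]$ as coming from the weight-$10$ newform $f=q+228q^3-666q^5+\cdots$ on $\Gamma_0(4)$. From the computed Fourier coefficients it then finds that $G$ has Hecke eigenvalues $-24$ at $p=3$ and $10332$ at $p=5$. These differ from $228(1+3^2)$ and $-666(1+5^2)$, so $G$ lies in the cuspidal copy. That route needs the structure of the Eisenstein part and Hecke theory, and tacitly that $G$ is an eigenvector. In return it reuses data one computes anyway and yields the eigenvalues of $G$.

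Your direct $\Phi$-check is more elementary. To make it a proof, add that the $Q_2^0$-part is an elliptic modular form of weight $10$ for a congruence subgroup, so a Sturm bound reduces the check to finitely many coefficients. A harmless slip: $Q_2\to 0$ is the cusp $D_{34}$, not $D_{12}$, since only $G_3,G_4$ and the $\vartheta_{[\mu;\nu]}$ with $\mu_2=0$ survive there.

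Your ``conceptual alternative'' is actually the strongest of the three, and it can be made exact.
\begin{itemize}
\item The formal substitution amounts to $\nu(l_i)=G_i/\chi_5^{1/6}$ up to a constant; compare $\chi_{1,-1/3}$ in the last section.
\item Hence a monomial $M=\prod l_i^{e_i}\prod p_{ab}^{f_{ab}}\in\mathcal{C}'_{d,b}$ goes to $c\prod G_i^{e_i}\prod\tilde p_{ab}^{f_{ab}}/\chi_5^{d}$, because $\sum e_i+2\sum f_{ab}=6d$.
\item The boundary orders are: ${\rm ord}_{D_{jk}}G_i=\tfrac14$ for $i\notin\{j,k\}$ and $0$ otherwise (Remark ii) after Proposition \ref{Wiclass}); ${\rm ord}_{D_{jk}}\vartheta_\pi=\tfrac14$ exactly when $j,k$ lie in the same triple of $\pi$ (by \eqref{tenrelations}); and ${\rm ord}_{D_{jk}}\chi_5=1$.
\item Summing the degree condition $e_i+\sum_{b\neq i}f_{ib}=d$ over the four $i\notin\{j,k\}$ then gives exactly ${\rm ord}_{D_{jk}}\nu(M)=f_{jk}\geq 0$.
\end{itemize}
So $\nu(C)$ is regular along every $D_{jk}$ for all $C\in\mathcal{C}'$. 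Consequently every holomorphic $\nu(I_5C)$ vanishes on the whole boundary and is a cusp form. This settles $F$ and $G$ at once, and the other cuspidality claims of that section, with no multiplicity count or Hecke eigenvalues.

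Your suggestion that symmetrization should ``spread'' the vanishing from $D_{45},D_{46},D_{56}$ is not the right mechanism. A linear combination of translates need not vanish anywhere in particular. It is also unnecessary, since the factor $I_5$ alone gives order $\geq 1$ on every $D_{jk}$.
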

\begin{proof} For $F$ this is clear as the isotypic component $s[2^2,1^2]$ occurs with multiplicity $1$ in both $M_{6,4}(\Gamma_2[2])$
and $S_{6,4}(\Gamma_2[2])$. 
We know that $M_{6,4}(\Gamma_2[2])$ is the direct sum of the space of cusp form and the space 
${\rm KE}_{6,4}(\Gamma_2[2])$ of Klingen-Eisenstein series.
The irreducible representation $s[3,1^3]\subset KE_{6,4}(\Gamma_2[2])$ comes from the space 
of new forms of weight $10$ on the congruence subgroup $\Gamma_0(4)\subset {\rm SL}(2,\ZZ)$; 
this space is one-dimensional and generated by a form $f$ whose Fourier expansion starts with
$f(\tau)=q + 228q^3 - 666q^5\ldots $  with $\tau\in \mathfrak{H}_1$ and $q=e^{2\pi i \tau}$.
By applying the map $\nu$ we can compute the Fourier expansion of $G$ 
and we get the following Fourier coefficients 
\begin{align*}
& a_{G}(1,1,1) =(0,0,1,2,1,0,0)^t,\quad   a_{G}(3,3,3)=(-36, -108, 3, 186, 3, -108, -36)^t,\\
& a_{G}(7,9,3)=(36, 156, 277, 258, 133, 36, 4)^t, \quad a_{G}(5,5,5)=(0, 0, 10332, 20664, 10332, 0, 0)^t.
\end{align*}
The Hecke eigenvalues are $-24$ at $p=3$ and $10332$ at $p=5$ for $G$. But we have  $-24\neq 228(1+3^2)$  
(and/or $10332\neq -666(1+5^2)$), so that the forms $G_i$ for $0\leqslant i \leqslant 10$ generate the irreducible representation $s[3,1^3]\subset S_{6,4}(\Gamma[2])$.
\end{proof}

Then we can obtain the Fourier expansions for $F$. 
We can calculate the Hecke eigenvalues for $F$ at $p=3$ and $p=5$ and find
$-280$ and $-2980$ in agreement with the values given by the website \cite{website}
where the eigenvalues are obtained by point counting over finite fields.

\bigskip

As another example we take $\mathcal{C}'_{2,8}(V^{\oplus 6})$ which is $15$-dimensional and generated by
the orbit under $\mathfrak{S}_6$ of the following covariant
\[
l^2_1l^2_2l^2_3(l_4l_5l_6,l_4l_5l_6)_2
\]
Under the map $\nu$, such a covariant produces a necessarily meromorphic modular form of weight $(8,-2)$ on $\Gamma_2[2]$.
The space $W$ generated by the orbit  under $\mathfrak{S}_6$ of the covariant
\[
l^2_1l^2_2l^2_3(l_4l_5l_6,l_4l_5l_6)_2-l^2_4l^2_5l^2_6(l_1l_2l_3,l_1l_2l_3)_2
\]
is $5$-dimensional ($\cong s[5,1]$) and any element $C$ in this orbit satisfies $v_{\pi}(C)\geqslant -1$; 
therefore we have
\[
\nu(I_5W)\subset S_{8,3}(\Gamma_2[2])\cong s[2,1^4]
\]
and we thus  built the full space $S_{8,3}(\Gamma_2[2])$.
\end{section}

\begin{section}{Intermediate levels}
Using the Galois action of $\mathfrak{S}_6$ on $\mathcal{A}_2[2]$ we can describe all 
modular forms for the intermediate levels between level $1$ and $2$ using covariants
for the actions on a totally split binary sextic, that is, for the action of 
${\rm GL}_2$ on $V^{\oplus 6}$. But for the intermediate levels we can also use
different splittings of the universal binary sextic $f_6$.
We illustrate this by two examples, one for $\Gamma_0[2]$ and one for $\Gamma[w]$
by using the splitting of $f_6$ as a product of three quadratic forms or
as a product of a quintic form and a linear form.

For $\Gamma_0[2]$ the corresponding moduli space can be described as the quotient
stack of $\mathcal{A}_2[2]$ under $\mathfrak{S}_4 \times \mathfrak{S}_2\cong \Gamma_0[2]/\Gamma_2[2]$.
We know that the dimension of the space $M_{j,k}(\Gamma_0[2])$ is given by
\[
\dim M_{j,k}(\Gamma_0[2])=m_{s[6]}+m_{s[4,2]}+m_{s[2^3]}
\]
with $m_{s[\varpi]}$ the multiplicity of the irreducible representation $s[\varpi]$ in $M_{j,k}(\Gamma_2[2])$,
see \cite[Section 9]{CvdGG}.
The generating series
for the dimension of spaces of scalar-valued modular forms on $\Gamma_0[2]$ is given by
\[
\sum_{k \geqslant 0}\dim M_{k}(\Gamma_0[2])\,t^k
=\frac{1+t^{19}}{(1-t^2)(1-t^4)^2(1-t^6)}=1+t^{2}+3 t^{4}+4 t^{6}+7 t^{8}+9 t^{10}+14 t^{12}+\dots
\]
(see for example \cite[Remark 3.3]{BC} and the references therein). Our example
deals with $M_{2}(\Gamma_0[2])$. Of course,
the generator of the $1$-dimensional space
 $M_{2}(\Gamma_0[2])$ can be obtained by symmetrising any element in $M_{2}(\Gamma_2[2])$ 
with respect to $\Gamma_0[2]/\Gamma_2[2]\cong \mathfrak{S}_4\times \ZZ/2\ZZ$,
but we can also use invariants of $V_2^{\oplus 3}$. Here
 $V_2$ denotes the space of binary quadric forms. We can use it since the irreducible representation $s[2^3]$
appears in the dimension formula.  We denote the three 
general binary quadric forms by
 \[
 q_1=a_0x_1^2+a_1x_1x_2+a_2x_2^2, \quad  q_2=b_0x_1^2+b_1x_1x_2+b_2x_2^2 \quad \text{and} \quad  q_3=c_0x_1^2+c_1x_1x_2+c_2x_2^2.
 \]
The generating series of the dimension of the space of invariants of degree $(d_1,d_2,d_3)$ 
in the coefficients of $q_1$, $q_2$ and $q_3$ is given by
 \[
 \sum_{d_1,d_2,d_3\geqslant 0} \dim \mathcal{I}_{(d_1,d_2,d_3)}(V_2^{\oplus 3})t^{d_1}
u^{d_2}v^{d_3}= \frac{1+t u v }{\left(1-t^{2}\right) \left(1-u^{2}\right) \left(1-v^{2}\right) \left(1-u v \right) \left(1-t u \right) \left(1-v t \right)}
 \]
so we have $\dim (\mathcal{I}_{(2,2,2)}(V_2^{\oplus 3}))=5$; remark that we only consider invariants 
of the same degree in the coefficients of $q_1$, $q_2$ and $q_3$. We put
\[
t_{ij}=-2(q_i,q_j)_2 \quad \text{for} \quad 1 \leqslant i\leqslant j\leqslant 3. 
\]
Note that $t_{ii}$ is simply the discriminant of $q_i$.
A basis of the space $\mathcal{I}_{(2,2,2)}(V_2^{\oplus 3})$ is then given by
\[
\mathcal{I}_{(2,2,2)}(V_2^{\oplus 3})=
\left\{
t_{11}t_{23}^2, t_{12}^2t_{33}, t_{13}^2t_{22}, t_{11}t_{22}t_{33}, t_{12}t_{13}t_{23}
\right\} \, .
\]
We then write $q_1=l_1l_2$, $q_2=l_3l_4$ and $q_3=l_5l_6$ and apply the criterion for regularity to a linear combination of the elements in the previous basis
of $\mathcal{I}_{(2,2,2)}(V_2^{\oplus 3})$ to get the following element
\[
I_{2,2,2}=t_{11}t_{22}t_{33}-t_{12}t_{13}t_{23}
\]
which satisfies $v_{\pi}(I_{2,2,2})\geqslant 0$ for any partition $\pi$. We then check that
\[
I_{2,2,2}=-2([1,4,6]+[1,3,6]+[1,3,5]+[1,4,5]) \quad \text{with} \quad [a,b,c]= p_{ab}p_{ac}p_{bc}p_{de}p_{df}p_{ef}
\] 
and therefore the image $\nu(I_{2,2,2})$ is, up to a multiplicative constant,
$\vartheta_1^4+\vartheta_2^4+\vartheta_3^4+\vartheta_4^4$, the generator of $M_2(\Gamma_0[2])$.

Note that any splitting of $\chi_{6,-2}$ into the product of $3$ meromorphic modular forms of 
weight $(2,k_i)$ with $k_1+k_2+k_3=-2$ can be used as a substitute for the substitution $\nu$. For example, we can choose
\[
\chi_{2,-2}=\frac{\Sym^2(G_1,G_2)}{\vartheta_1\ldots\vartheta_6}, \quad \chi^{(1)}_{2,0}=\frac{\Sym^2(G_1,G_2)}{\vartheta_7\vartheta_8} \quad \text{and} 
\quad 
\chi^{(2)}_{2,0}=\frac{\Sym^2(G_5,G_6)}{\vartheta_9\vartheta_{10}}
\]
and define a homomorphism
$$
\nu: \mathcal{C}^{\prime}(V_2^{\oplus 3}) \longrightarrow \mathcal{R}(\Gamma_0[2])[1/\chi_5]
$$
by making the substitution $a_i \leftrightarrow i$th component of $\chi_{2,-2}$, $b_i \leftrightarrow i$th component of $\chi^{(1)}_{2,0}$ and
$c_i \leftrightarrow i$th component of $\chi^{(2)}_{2,0}$. The criterion for holomorphicity allows a variant
for this case. Thus this easy example illustrates this map $\nu$.

\bigskip
The moduli space defined by the subgroup $\mathfrak{S}_{5}\times \mathfrak{S}_1$ of the
Galois group $\mathfrak{S}_6$, say fixing the sixth Weierstrass point, 
contains as a Zariski open subset the moduli space $\mathcal{M}_2[w]$ of curves
of genus $2$ with a marked Weierstrass point. We therefore denote $\mathcal{A}_2[2]/\mathfrak{S}_5$
 by $\mathcal{A}_2[w]$.
Analogously to  the description of $\mathcal{M}_2$ as a quotient stack we can describe $\mathcal{M}_2[w]$
as the $\mathfrak{S}_5$ quotient stack of $[\mathcal{P}\times V_{6,-2}^0/{\rm GL}_2]$ or directly as
$$
\mathcal{M}_2[w] \cong [(V_{5,-1} \times V_{1,-1})^0/{\GG}_m \times{\rm GL}_2]\, ,
$$
where $V_{a,b}={\rm Sym}^a(V)\otimes \det(V)^b$ and the action of ${\rm GL}_2$ is by
$$
(f_5,f_1) \mapsto (ad-bc)^{-1}\left( (cx+d)^5 f_5(\frac{ax+b}{cx+d}),(cx+d)f_1(\frac{ax+b}{cx+d})\right)
$$
and that of ${\GG}_m$  by
$$
(f_5,f_1)\mapsto (tf_5,t^{-1}f_1) \, .
$$
In $(V_{5,-1} \times V_{1,-1})^0$ the upper index $0$ indicates that we
consider pairs with non-vanishing discriminant of $f_5f_1$.
The stabilizer of a general point $(f_5,f_1)$ is generated by the
involution $(-1,-{\rm id})$.
We have a commutative diagram

\begin{displaymath}
\begin{xy}
\xymatrix{
\mathcal{M}_2[w] \ar[r] \ar[d] & [(V_{5,-1} \times V_{1,-1})^0/{\GG}_m \times{\rm GL}_2]\ar[d]^{\pi}\\
\mathcal{M}_2 \ar[r]^{\sim}  &  [V_{6,-2}^0/{\rm GL}_2]\\
}
\end{xy}
\end{displaymath}
where the arrow on the right sends $(f_5,f_1)$ to the product $f_5f_1$.
Just as before we get homomorphisms
$$
\mathcal{R}(\mathcal{A}_2[w]) \xrightarrow{\mu} \mathcal{C}^{\prime}(V^{\oplus 5} \oplus V)
\xrightarrow{\nu} \mathcal{R}(\mathcal{A}_2[w])[1/\chi_5]
$$
To describe $\nu$ explicitly we split the binary sextic as
$$
f_{6,-2}=\sum_{i=0}^6 a_i x_1^{6-i}x_2^i = (b_0 x_1^5+\cdots+ b_5x_2^5)(c_0x_1+c_1x_2)= f_{5,-1}f_{1,-1}\, .
$$
Covariants in $ \mathcal{C}^{\prime}(V^{\oplus 5} \oplus V)$ can be viewed as polynomials in $x_1,x_2$ with coefficients that are polynomials in $b_0,\ldots,b_5$ and $\gamma_0,\gamma_1$.
We define 
$$
\chi_{5,-5/3}= {\rm Sym}^5(G_1,\ldots,G_5)/\chi_5^{5/6}, \qquad \chi_{1,-1/3}=G_6/\chi_5^{1/6} \, .
$$
These are vector-valued meromorphic functions on the Siegel upper half space $\mathfrak{H}_2$ that we can write as
$$
\chi_{5,-5/3}= \sum_{i=0}^5 \beta_i\, x_1^{5-i}x_2^i, \qquad \chi_{1,-1}=\gamma_0 x_1 +\gamma_1x_2\, .
$$
\begin{proposition}
The homomorphism $\nu: \mathcal{C}^{\prime}(V^{\oplus 5} \oplus V)
\xrightarrow{\nu} \mathcal{R}(\mathcal{A}_2[w])[1/\chi_5]$
 is given by substituting $\beta_i$ for $b_i$ and $\gamma_i$ for $c_i$
in a covariant.
\end{proposition}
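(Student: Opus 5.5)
The plan is to reduce to the full level $2$ case, where the substitution $\nu$ on $\mathcal{C}'(V^{\oplus 6})$ has already been established. First I will relate the two substitutions. Write $f_{5,-1}=l_1\cdots l_5$ and $f_{1,-1}=l_6$. This gives a ring homomorphism
$$
\rho:\mathcal{C}'(V^{\oplus 5}\oplus V)\to \mathcal{C}'(V^{\oplus 6}),
$$
defined by expressing $b_i$ as the elementary symmetric-type coefficients of $l_1\cdots l_5$ and $c_i=l_{6,i+1}$. A covariant $C$ in $\mathcal{C}'(V^{\oplus 5}\oplus V)$ has equal degree $d$ in the $b$'s and in the $c$'s, which is the $\mathbb{G}_m$-invariance. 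Hence $\rho(C)$ has degree $d$ in each of $l_1,\ldots,l_6$, so it lies in $\mathcal{C}'(V^{\oplus 6})$, and it is $\mathfrak{S}_5$-invariant. This $\rho$ is the algebraic counterpart of the stack map $[\mathcal{P}/{\rm GL}_2]\to[(V_{5,-1}\times V_{1,-1})^0/\mathbb{G}_m\times{\rm GL}_2]$, which forgets the ordering of the first five roots. So the map $\nu$ for level $w$ is, by construction, the composite of $\rho$ with the level $2$ map $\nu$, followed by taking $\mathfrak{S}_5$-invariants, which lands in $\mathcal{R}(\mathcal{A}_2[w])[1/\chi_5]$.

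Next I compute this composite explicitly. Under the level $2$ substitution, $l_i\mapsto G_i/\epsilon_i$ with $\epsilon_i=(\prod_{a<b,\,a\ne i\ne b}p_{ab})^{1/4}$, where $p_{ab}$ now means $\tilde p_{ab}=G_a\wedge G_b$. Each $\tilde p_{ab}$ with $a,b\neq i$ appears in these products, so $\prod_{i=1}^6\epsilon_i=(\prod_{a<b}\tilde p_{ab})^{4/4}$. By the relation $\prod\tilde p_{ab}=-2^{36}\chi_5^6$, this equals $\chi_5^6$ up to a constant. Then $\rho(C)$, a monomial-wise product, is sent to the substitution $b_i\mapsto$ the coefficients of ${\rm Sym}^5(G_1,\ldots,G_5)/(\epsilon_1\cdots\epsilon_5)$ and $c_i\mapsto$ the coefficients of $G_6/\epsilon_6$. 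The key step is the $\mathbb{G}_m$-invariance, i.e. the equal degree $d$ in $b$ and $c$. Because of it, only the ratio of the scalings of the $b$'s and the $c$'s matters, up to a global factor. More precisely, a term with $b$-degree $d$ and $c$-degree $d$ acquires the factor
$$
(\epsilon_1\cdots\epsilon_5)^{-d}\,\epsilon_6^{-d}=\Big(\prod_{i=1}^6\epsilon_i\Big)^{-d}.
$$
This equals $(\chi_5^{5/6}\chi_5^{1/6})^{-d}$ up to a constant. So replacing $(\epsilon_1\cdots\epsilon_5,\epsilon_6)$ by $(\chi_5^{5/6},\chi_5^{1/6})$ does not change the value of $C$. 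That value is therefore the substitution $b_i\mapsto\beta_i$, $c_i\mapsto\gamma_i$, as claimed.

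The main obstacle is making this rescaling rigorous, since the $\epsilon_i$ and $\chi_5^{1/6}$ are only defined up to roots of unity as multivalued functions on $\mathfrak{H}_2$. I would handle it by working with the pair $(\lambda,\lambda^{-1})$, where $\lambda=\chi_5^{5/6}/(\epsilon_1\cdots\epsilon_5)$. I check that $\epsilon_6\lambda^{-1}$ equals $\chi_5^{1/6}$ up to a constant, using the product identity above. I then invoke the $\mathbb{G}_m$-invariance of $C$ to see that the substitution is unchanged under $(b,c)\mapsto(\lambda b,\lambda^{-1}c)$. Finally, the root-of-unity ambiguities cancel for the same reason, because the total exponents match. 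Well-definedness and the target ring $\mathcal{R}(\mathcal{A}_2[w])[1/\chi_5]$ then follow from the level $2$ proposition together with $\mathfrak{S}_5$-invariance. In particular the result does not depend on the chosen splitting.
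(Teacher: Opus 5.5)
\textbf{There is a genuine gap.} The paper states this proposition without proof. The justification it has in mind, namely that the resulting $\nu$ ``does not depend on the splitting'' (see the introduction), is exactly your $\GG_m$-invariance idea, and organising it through $\rho$ and the level-$2$ map is reasonable. The gap is in the quantitative step on which everything rests.

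You correctly compute $\prod_{i=1}^6\epsilon_i=\prod_{a<b}\tilde p_{ab}=-2^{36}\chi_5^6$. You then assert that $(\prod_i\epsilon_i)^{-d}$ equals $(\chi_5^{5/6}\chi_5^{1/6})^{-d}=\chi_5^{-d}$ up to a constant, but the two differ by $\chi_5^{5d}$. Your check in the last paragraph fails for the same reason: $\epsilon_6\lambda^{-1}=\prod_i\epsilon_i/\chi_5^{5/6}$ is a constant times $\chi_5^{31/6}$, not $\chi_5^{1/6}$. The $\GG_m$-invariance only lets you move scalar factors between the $b$'s and the $c$'s while keeping their product fixed. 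It cannot change that product, so your rescaling does change the value of $C$.

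The source of the trouble is reading the level-$2$ formula with $p_{ab}=\tilde p_{ab}$. Each even $\vartheta_\pi$ divides exactly four of the ten $\tilde p_{ab}$ with $a,b\neq i$: one pair from the triple of $\pi$ containing $i$, three from the other triple. Hence $\prod_{a<b,\,a\neq i\neq b}\tilde p_{ab}$ is a constant times $\chi_5^4$, and each $\epsilon_i$ is a constant times $\chi_5$. That substitution would send $l_1\cdots l_6$ to ${\rm Sym}^6(G_1,\ldots,G_6)/\chi_5^6$, of weight $(6,-27)$ instead of $(6,-2)$. So you cannot simply quote it.

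\textbf{How to repair it.} Fix the normalisation directly.
\begin{itemize}
\item A covariant in $\mathcal{C}'(V^{\oplus 6})$ of degree $d$ is invariant under $l_i\mapsto t_il_i$ with $\prod_i t_i=1$. So the substitution $l_i\mapsto u_iG_i$ gives $(\prod_i u_i)^d\,C(G_1,\ldots,G_6)$, which depends only on $\prod_i u_i$.
\item The weight rule $(b,d-b/2)$ forces $\prod_i u_i=c\,\chi_5^{-1}$. Equivalently, use the paper's identification $\nu(I_5C_{1,6})=\chi_{6,3}={\rm Sym}^6(G_1,\ldots,G_6)$, i.e.\ $\nu(C_{1,6})=\chi_{6,-2}$, or $\nu(I_5)=\chi_5$.
\item This is also what the paper's formula gives if the $p_{ab}$ in the denominator are read self-consistently as the images $\nu(p_{ab})=\tilde p_{ab}u_au_b$.
\item Take $u_1=\cdots=u_6=\chi_5^{-1/6}$ and compose with $\rho$. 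This gives exactly $b_i\mapsto\beta_i$, $c_i\mapsto\gamma_i$.
\end{itemize}
The rest of your argument then goes through: the branch ambiguities cancel because the total exponent of $\chi_5$ is $-d$, and the $\mathfrak{S}_5$-invariance holds as you say.
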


\begin{remark}
Alternatively, we could consider a splitting of 
$ \chi_{6,-2}= {\rm Sym}^6(G_1,\ldots,G_6)/\chi_5$
as $\chi_{6,-2}=\chi_{5,-1}\chi_{1,-1}$ with
$$
\chi_{5,-1}= \frac{{\rm Sym}^5(G_1,G_2,G_3,G_4,G_5)}{\vartheta_1\vartheta_2\vartheta_5\vartheta_6\vartheta_7\vartheta_8\vartheta_9}
\quad \text{\rm and} \quad \chi_{1,-1}= \frac{G_6}{\vartheta_3\vartheta_4\vartheta_{10}}
$$
and use these for the substitution.
\end{remark}
\bigskip
We illustrate this by finding modular forms in the $s[5,1]$ isotypic component of $S_{2,11}(\Gamma_2[2])$.
The generating series of the multiplicities $m(k)$ of the isotypic component $s[5,1]$ in $S_{2,2k+1}(\Gamma_2[2])$ is given by
$$
\sum_{k\geqslant 0} m(k) t^{2k+1} =\frac{t^{9}+
2 t^{11}+t^{13}+3 t^{15}+3 t^{17}+2 t^{19}+t^{21}+2 t^{23}}{\left(1-t^{4}\right) \left(1-t^{6}\right) \left(1-t^{10}\right) \left(1-t^{12}\right)}=t^{9}+2 t^{11}+2 t^{13}+\ldots
$$
and since there is no cusp form of weight $(2,11)$ in level $1$, we have
$\dim S_{2,11}(\Gamma_2[w])=2$.
We consider the following covariant
\[
\begin{aligned}
C_{2,2}=& 50\,((f_5,f_5)_4,l^2)_1\\
 = & (-50 a_{0}^{2} e_{0} e_{5}+6 a_{0}^{2} e_{1} e_{4}-a_{0}^{2} e_{2} e_{3}+20 a_{0} a_{1} e_{0} e_{4}-8 a_{0} a_{1} e_{1} e_{3}+3 a_{0} a_{1} e_{2}^{2})x_1^2+\ldots
\end{aligned}
\]
where we put
\[
f_5=e_0x_1^5+e_1x_1^4x_2+\ldots+e_5x_2^5 \quad \text{and} \quad  l=a_0x_1+a_1x_2. 
\]
The covariant $C_{2,2}=\sum_{j=0}^2 C_{2,2}^{(j)}x_1^{2-j}x_2^j$ is 
of degree $2$ in the coefficients of $f_5$, $l$ and in $x_1, x_2$ 
so the substitution $\nu$ provides a meromorphic modular form of weight 
$(2,1)$ in $s[5,1]\subset\Gamma_2[2]$ which is $\mathfrak{S}_5\times \mathfrak{S}_1$-invariant; 
that is,  a meromorphic modular form of weight $(2,1)$ on $\Gamma_2[w]$.
We then write $f_5=l_1l_2l_3l_4l_5$ and $l=l_6$ and check that for any partition $\pi$ we have
\[
v_{\pi}(C_{2,2}^{(j)})=-1 \quad \text{for} \quad j=0, 2  \quad \text{and} \quad v_{\pi}(C_{2,2}^{(1)})=-2\, .
\]
Thus the covariant $\nu(I^2_5C_{2,2})$ defines a modular form of weight $(2,11)$ on $\Gamma_2[w]$.
 Note that the orbit of $C_{2,2}$ under $\mathfrak{S}_6$ generates one of the $2$ copies of the irreducible 
representation $s[5,1]\subset S_{2,11}(\Gamma_2[2])$.

\end{section}


\begin{thebibliography}{99}

\bibitem{BC} J.\ Bergstr\"om, F.\ Cl\'ery:
{\sl Dimension formulas for spaces of vector-valued Siegel modular forms of degree $2$
and level $2$.} Publ.\ Mat., \textbf{69} (2), 367–-388, 2025.

\bibitem{BGHZ} J.\ Bruinier, G.\ van der Geer, G.\ Harder, D.\ Zagier:
{\sl The 1-2-3 of modular forms.} Universitext. Springer Verlag 2007.

\bibitem{CFG1} F.\ Cl\'ery, C.\ Faber, and G.\ van der Geer:
{\sl Covariants of binary sextics and vector-valued Siegel modular
forms of genus $2$.} Math.\ Annalen \textbf{369}(3–-4), 1649–-1669
(2017).

\bibitem{CFG2} F.\ Cl\'ery, C.\ Faber, and G.\ van der Geer:
{\sl Covariants of binary sextics and modular forms of degree $2$ with a character.}
Mathematics of Computation  \textbf{88}, No.\ 319, 2423--2441, 2019.

\bibitem{CFG3} F.\ Cl\'ery, C. Faber, and G. van der Geer:
{\sl Concomitants of ternary quartics and vector-valued Siegel modular
and Teichmueller modular forms of genus three.}
Selecta Mathematica (2020) 26:55.

\bibitem{CvdG-PAMQ} F. Cl\'ery and G. van der Geer:
{\sl Generating Picard modular forms by means of invariant theory.}
Pure and Applied Mathematics Quarterly
\textbf{19}, (2023), 95–-147, 2023

\bibitem{CvdG2023} F. Cl\'ery and G. van der Geer:
{\sl Modular forms via invariant theory.}
Research in Number Theory \textbf{9} (2023), No.\ 35, 10.

\bibitem{CvdGG} F.\ Cl\'ery, G.\ van der Geer,
S.\ Grushevsky: 
{\sl Siegel modular forms of genus $2$ and level $2$.}
International Journal of Mathematics \textbf{26}, No. 5 (2015) 1550034. 

\bibitem{D-O} I.\ Dolgachev, D.\ Ortland:
{\sl Point sets in projective spaces and theta functions.}
Ast\'erisque \textbf{165} (1988). Soci\'et\'e Math\'ematique de France.

\bibitem{F-SM2015}
E.\ Freitag, R.\ Salvati Manni:
{\sl Basic vector valued {S}iegel modular forms of genus two.}
Osaka Journal of Mathematics \textbf{52} (2015), 879--894.

\bibitem{vdG} G.\ van der Geer:
{\sl On the geometry of a Siegel modular threefold.}
Mathematische Annalen \textbf{260}, 317--350 (1982).

\bibitem{vdG-K} G.\ van der Geer, A.\ Kouvidakis:
{\sl Effective divisors on projectivized Hodge bundles and modular forms.}
Math.\ Nachr.\  \textbf{297} (2024), 1142--1170.


\bibitem{Geyer} 
W.\ Geyer: {\sl Invarianten bin\"arer Formen}.  In: Classification of Algebraic Varieties and Compact
Complex Manifolds. Lecture Notes in Math, vol.\ 412 (Springer, Berlin, 1974), pp. 36–-69.

\bibitem{Gordan} 
P.\ Gordan: {\sl Die simultanen Systeme bin\"arer Formen.} Math.\ Ann., 2(2):227--280, 1870.


\bibitem{HMSV} B.\ Howard, J.\ Millson, A.\ Snowden, R.\ Vakil:
{\sl A description of the outer automorphism of $S_6$ and the invariants of six points in projective space.}
Journal of Combinatorial Theory, Series A 115 (2008) 1296–-1303.


\bibitem{IgusaBook} J.-I. Igusa: {\sl Theta Functions.} 
Die Grundlehren der mathematischen Wissenschaften,
Vol.\ 194 (Springer-Verlag, 1972).

\bibitem{Igusa1964}
J.-I. Igusa:
{\sl On Siegel modular forms of genus two (II)},
Amer.\ J.\ Math.\ \textbf{86} (1964), 392--412.

\bibitem{Popoviciu} M.I.\ Popoviciu-Draisma:
{\sl Invariants of binary forms}. Inauguraldissertation Universit\"at Basel (2014).
{\url{https://edoc.unibas.ch/33424/1/thesis_popoviciudraisma.pdf}},

\bibitem{Springer} T.\ A.\ Springer: {\sl Invariant Theory}. 
Lecture Notes in Mathematics \textbf{585}. Springer-Verlag, Berlin-New York, 1977.

\bibitem{SF} J.\ Sylvester, F.\ Franklin:
{\sl Tables of the {G}enerating {F}unctions and {G}roundforms for
              the {B}inary {Q}uantics of the {F}irst {T}en {O}rders.}
American Journal of Mathematics \textbf{2} (1879), 223--251.

\bibitem{website} 
Tables available at {\url{https://smf.compositio.nl/Level2}}.
Tables for Siegel modular forms of degree $2$ and $3$,
initiated by Jonas Bergstr\"om, Carel Faber, and Gerard van der Geer, with
cooperation by Fabien Cl\'ery.




\end{thebibliography}
\end{document}